\documentclass[preprint,10pt]{elsarticle}

\usepackage{amsthm,amsmath,amsfonts,amssymb,amscd,mathrsfs}
\usepackage{txfonts}
\usepackage{supertabular,soul}
\usepackage[usenames,dvipsnames]{xcolor}
\usepackage{tikz, graphicx,color,geometry}
\usepackage{multirow}
\usetikzlibrary{arrows}
\usepackage{blindtext,ulem}

\usepackage[pdftex,
            pdfauthor={Webb},
            pdftitle={Automorphisms, Equitable Partitions, and Spectral Graph Theory},
            pdfsubject={Equitable Partitions, Spectral Graph Theory},
            pdfkeywords={Equitable Partitions, Spectral Graph Theory}]{hyperref}

\usepackage{bbm} 
\usepackage{hyperref}
\usepackage{yfonts}
\usepackage{eucal}
\usepackage{overpic}
\usetikzlibrary{calc}
\usepackage{enumitem}

\newcommand{\comment}[1]{}

\def\m{\medskip}
\def\s{\smallskip}

\newcommand{\defital}{\textit}
\newcommand{\ds}{\displaystyle}

\newcommand{\Aut}{\text{Aut}}

\newcommand{\cT}{\mathcal{T}}
\newcommand{\tM}{\widetilde{M}}
\newcommand{\tA}{\tilde{A}}
\newcommand{\cU}{\mathcal{U}}
\renewcommand{\so}{\mathscr{O}}



\newtheorem{thm}{Theorem}[section]

\newtheorem{prop}[thm]{Proposition}

\newtheorem{cor}[thm]{Corollary}

\theoremstyle{definition}
\newtheorem{remark}[thm]{Remark}

\newtheorem{defn}[thm]{Definition}

\newtheorem{example}[thm]{Example}

\theoremstyle{remark}

\providecommand*{\propertyautorefname}{Property}

\setlength{\marginparwidth}{0.8in}
\let\oldmarginpar\marginpar
\renewcommand\marginpar[1]{\oldmarginpar[\raggedleft\footnotesize #1]%
{\raggedright\footnotesize #1}}

\renewcommand{\emph}{\textit}
\begin{document}
\begin{frontmatter}

\date{\today}

\title{Extensions and Applications of Equitable Decompositions for Graphs with Symmetries}
\author[amanda]{Amanda Francis}
\address[amanda]{Department of Mathematics, Engineering, and Computer Science, Carroll College, Helena, MT 59601, USA, afrancis@carroll.edu}
\author[dallas]{Dallas Smith}
\address[dallas]{Department of Mathematics, Brigham Young University, Provo, UT 84602, USA, dallas.smith@mathematics.byu.edu }
\author[derek]{\\Derek Sorensen}
\address[derek]{Mathematical Institute, University of Oxford, Oxford, England, derek.sorensen@maths.ox.ac.uk}
\author[ben]{Benjamin Webb}
\address[ben]{Department of Mathematics, Brigham Young University, Provo, UT 84602, USA, bwebb@mathematics.byu.edu}


\begin{abstract}
{We extend the theory of equitable decompositions introduced in \cite{BFW}, where it was shown that if a graph has a particular type of symmetry, i.e. a uniform or basic automorphism $\phi$, it is possible to use $\phi$ to decompose a matrix $M$ appropriately associated with the graph. The result is a number of strictly smaller matrices whose collective eigenvalues are the same as the eigenvalues of the original matrix $M$. We show here that a large class of automorphisms, which we refer to as \emph{separable},
 can be realized as a sequence of basic automorphisms, allowing us to equitably decompose $M$ over any such automorphism. We also show that not only can a matrix $M$ be decomposed but that the eigenvectors of $M$ can also be equitably decomposed. Additionally, we prove under mild conditions that if a matrix $M$ is equitably decomposed the resulting divisor matrix, which is the divisor matrix of the associated equitable partition, will have the same spectral radius as the original matrix $M$. Last, we describe how an equitable decomposition effects the Gershgorin region $\Gamma(M)$ of a matrix $M$, which can be used to localize the eigenvalues of $M$. We show that the Gershgorin region of an equitable decomposition of $M$ is contained in the Gershgorin region $\Gamma(M)$ of the original matrix. We demonstrate on a real-world network that by a sequence of equitable decompositions it is possible to significantly reduce the size of a matrix' Gershgorin region.}
\end{abstract}

\begin{keyword}
Equitable Partition\sep Automorphism \sep Graph Symmetry \sep Gershgorin Estimates \sep Spectral Radius\\
AMS Classification: 05C50
\end{keyword}

\end{frontmatter}
\section{Introduction}

Spectral graph theory is the study of the relationship between two objects, a graph $G$ and an associated matrix $M$. The goal of this theory is to understand how spectral properties of the matrix $M$ can be used to infer structural properties of the graph $G$ and vice versa.

The particular structures we consider in this paper are graph symmetries. A graph is said to have a \emph{symmetry} if there is a permutation $\phi: V(G) \to V(G)$ of the graph's vertices $V(G)$ that preserves (weighted) adjacencies. The permutation $\phi$ is called an \emph{automorphism} of $G$, hence the symmetries of the graph $G$ are characterized by the graph's set of automorphisms. Intuitively, a graph automorphism describes how parts of a graph can be interchanged in a way that preserves the graph's overall structure.  In this sense these \emph{smaller parts}, i.e., subgraphs, are symmetrical and together these subgraphs constitute a graph symmetry.

In a previous paper \cite{BFW} it was shown that if a graph $G$ has a particular type of automorphism $\phi$ then it is possible to decompose any matrix $M$ that respects the structure of $G$ into a number of smaller matrices $M_{\phi},B_1,\dots,B_{k-1}$. Importantly, the eigenvalues of $M$ and the collective eigenvalues of these smaller matrices are the same, i.e.
\begin{equation*}\label{eq:first}
\sigma(M)=\sigma(M_{\phi})\cup\sigma(B_1)\cup\cdots\cup\sigma(B_{k-1}).
\end{equation*}

This method of decomposing a matrix into a number of smaller pieces over a graph symmetry is referred to as an \emph{equitable decomposition} due to its connection with the theory of equitable partitions. An \emph{equitable partition} of the adjacency matrix $A$ associated with a graph $G$ is a partition of the graph's set of vertices, which may arise from an automorphism $\phi$ of $G$, yielding a smaller matrix $A_{\phi}$ whose eigenvalues form a subset of the spectrum of $A$ (Theorem 9.3.3 of \cite{Godsil} , Theorem 3.9.5 of \cite{cvet}  ).

In \cite{BFW} the notion of an equitable partition is extended to other matrices beyond the adjacency matrix of a graph to include various Laplacian matrices, distance matrices, etc. (see Proposition 3.4).  This class of matrices, referred to as \emph{automorphism compatible} matrices, are those matrices associated with a graph $G$ that can be equitably decomposed over an automorphism $\phi$ of $G$. In particular, the matrix $M_{\phi}$ in the resulting decomposition is the same as the matrix that results from an equitable decomposition of $G$ if $M=A$ is the adjacency matrix of $G$.

The particular types of automorphisms considered in \cite{BFW} are referred to as uniform and basic automorphisms. A \emph{uniform} automorphism $\phi$ is one in which all orbits have the same cardinality (see Remark \ref{rem:orbits}). A \emph{basic} automorphism $\phi$ is an automorphism for which all \emph{nontrivial orbits}, i.e. orbits of size greater than one, have the same cardinality. Hence, any uniform automorphism is a basic automorphism.

Since many graph automorphisms are not basic, a natural question is whether an automorphism compatible matrix $M$ can be decomposed over {a nonbasic automorphism}. Here we show that if an automorphism is separable, i.e. is an automorphism whose order is the product of distinct primes, then there are basic automorphisms $\psi_0,\psi_1,\dots,\psi_h$ that induce a sequence of equitable decompositions on $M$. The result is a collection of smaller matrices $M_{\phi},B_1,\dots,B_{k}$ such that
\begin{equation*}
\sigma(M)=\sigma(M_{\phi})\cup\sigma(B_1)\cup\cdots\cup\sigma(B_{k})
\end{equation*}
where $k = p_0p_1 \ldots p_h -1$ and $M_{\phi}$ is again the matrix associated with the equitable partition induced by $\phi$ (see Theorem \ref{thm:2}). That is, the theory of equitable decompositions can be extended to any separable automorphism of a graph $G$.

We then show that not only can a matrix $M$ be equitably decomposed but also the eigenvectors (and generalized eigenvectors) of $M$ can be decomposed over any basic or separable automorphism $\phi$. More specifically, if $M$ can be decomposed into the matrices $M_{\phi},B_1,\dots,B_{k}$ over $\phi$ then the eigenvectors of $M$ can be explicitly constructed from the eigenvectors of $M_{\phi},B_1,\dots,B_{k}$. That is, the eigenvectors of these smaller matrices form the building blocks of the larger eigenvectors of the original matrix $M$ (see Theorem \ref{eigvec} for basic automorphisms), which we refer to as an equitable decomposition of the eigenvectors (and generalized eigenvectors) of $M$.

It is worth mentioning that if $\phi$ is any automorphism of $G$ then some power $\psi=\phi^\ell$ is a separable automorphism. Hence, if any automorphism of a graph is known it is possible to use this automorphism or some power of this automorphism to equitably decompose an associated matrix $M$.

Importantly,  an equitable decomposition of $M$, as opposed to its spectral decomposition, does not require any knowledge of the matrix' eigenvalues or eigenvectors. Only the knowledge of a symmetry of $G$ is needed. {In fact, if an automorphism describes a graph symmetry that involves only part of the graph i.e. a \emph{local symmetry},} this local information together with the theory presented here can be used to determine properties of the graph's associated eigenvalues and eigenvectors, which in general depend on the entire graph structure!

This method of using local symmetries to determine spectral properties of a graph is perhaps most useful in analyzing the spectral properties of real-world networks. One reason is that many networks have a high degree of symmetry \cite{MacArthur} when compared, for instance, to randomly generated graphs \cite{Aldous2000,Newman10,Strogatz03,Watts99}. From a practical point of view, the large size of these networks limit our ability to quickly compute their associated eigenvalues and eigenvectors. However, their high degree of symmetry suggests that {it may be possible to effectively estimate a network's spectral properties by equitably decomposing the network over local symmetries, which is a potentially much more feasible task.}

For instance, we show that in a network given by the graph G with automorphism compatible matrix $M$, the spectral radius of $M$ and its divisor matrix $M_{\phi}$ are equal if $M$ is both nonnegative and irreducible (see Proposition \ref{lem:3}). This result is of interest by itself since the spectral radius can be used to study stability properties of a network's dynamics \cite{BW10,BW11}.

Additionally, we show that the Gershgorin region associated with an equitable decomposition is contained in the Gershgorin region associated with the original undecomposed matrix (see Theorem \ref{thm:Gersh}). Since the eigenvalues of a matrix are contained in its Gershgorin region \cite{Gershgorin31}, then by equitably decomposing a matrix over some automorphism that is either basic or separable it is possible to gain improved eigenvalue estimates of the matrix' eigenvalues. Again, this result is potentially useful for estimating the eigenvalues associated with a real network {as such networks often have a high degree of symmetry.}

This paper is organized as follows. In Section \ref{sec:EP} we summarize the theory of equitable decompositions found in \cite{BFW}. In Section \ref{sec:3} we describe how the theory of equitable decompositions can be extended to separable automorphisms by showing that a decomposition over such an automorphism $\phi$ can be realized as a sequence of decompositions over basic automorphisms $\psi_0,\psi_1,\dots,\psi_h$ (Corollary \ref{cor:4}). We also present an algorithm describing how these automorphisms can be generated and used to equitably decompose an associated matrix.

In Section \ref{sec:4} we introduce the notion of an equitable decomposition of a matrix' eigenvectors and generalized eigenvectors (Theorem \ref{eigvec}).  We also demonstrate that $M$ and $M_{\phi}$ have the same spectral radius if $M$ is both nonnegative and irreducible (Proposition \ref{lem:3}).

In Section \ref{sec:5} we show that we gain improved eigenvalue estimates using Gershgorin's theorem when a matrix is equitably decomposed (Theorem \ref{thm:Gersh}), which we demonstrate on a large social network from the pre-American revolutionary war era. In Section \ref{sec:6} we show how the theory of equitable decompositions can be directly applied to graphs. Section \ref{sec:7} contains some closing remarks including a few open questions regarding equitable decompositions.

\section{Graph Symmetries and Equitable Decompositions}\label{sec:EP}

The main objects considered in this paper are graphs. A \emph{graph} $G$ is made up of a finite set of vertices $V(G)=\{1,\dots,n\}$ and and a finite set of edges $E(G)$. The vertices of a graph are typically represented by points in the plane and an edge by a line or curve in the plane that connects two vertices. A graph can be \emph{undirected}, meaning that each edge $\{i,j\}\in E$ can be thought of as an unordered set or a multiset if $i=j$ ($\{i,i\}\in E$). A graph is \emph{directed} when each edge is {\emph{directed}, in which case $(i,j)$ is an ordered tuple. In both a directed and undirected graph, a} \emph{loop} is an edge with only one vertex ($\{i, i\}\in E$). A \emph{weighted graph} is a graph, either directed or undirected, in which each edge $\{i,j\}$ or $(i,j)$ is assigned a numerical weight $w(i,j)$.

In practice there are a number of matrices that are often associated with a given graph $G$. Two of the most common are the adjacency matrix $A=A(G)$ and the Laplacian matrix $L=L(G)$ of a graph $G$. The adjacency matrix of a graph is the $0$-$1$ matrix given by
\[
A_{ij}=
\begin{cases}
1 &\text{if} \ \ (i,j)\in E(G)\\
0 &\text{otherwise}.
\end{cases}
\]
To define the Laplacian matrix of a \emph{simple graph} $G$, i.e. an unweighted undirected graph without loops, let $D_G=\text{diag}[\text{deg}(1),\dots,\text{deg}(n)]$ denote the degree matrix of $G$. Then the Laplacian matrix $L(G)$ is the matrix $L(G)=D_G-A(G)$. If $G$ is a weighted graph its \textit{weighted adjacency matrix} $W=W(G)$ is given by its edge weights $W_{ij} = w(i,j)$,  where  $w(i,j) \neq 0$ if and only if $(i,j) \in E(G)$.

For an $n\times n$ matrix $M=M(G)$ associated with a graph $G$ we let $\sigma(M)$ denote the \emph{eigenvalues} of $M$. For us $\sigma(M)$ is a multiset with each eigenvalue in $\sigma(M)$ listed according to its multiplicity.

One of our main concerns in this paper is understanding how symmetries in a graph's structure (i) affect the eigenvalues and eigenvectors of a matrix $M=M(G)$ and (ii) how these symmetries can be used to decompose the matrix $M$ into a number of smaller matrices in a way that preserves the eigenvalues of $M$. Such graph symmetries are formally described by the graph's set of automorphisms.

\begin{defn}
An \emph{automorphism} $\phi$ of an unweighted graph $G$ is a permutation of $V(G)$ such that the adjacency matrix $A=A(G)$ satisfies $A_{ij} = A_{\phi(i) \phi(j)}$ for each pair of vertices $i$ and $j$. Note that  this is equivalent to saying
$i$ and $j$ are adjacent in $G$ if and only if $\phi(i)$ and $\phi(j)$ are adjacent in $G$. For a weighted graph $G$, if $w(i,j) = w(\phi(i), \phi(j))$ for each pair of vertices $i$ and $j$, then $\phi$ is an automorphism of $G$.

The set of all automorphisms of $G$ is a group, denoted by $\Aut(G)$. The \emph{order} of $\phi$ is the smallest positive integer $\ell$ such that $\phi^\ell$ is the identity.
\end{defn}

\begin{remark}\label{rem:orbits}
For a graph $G$ with automorphism $\phi$, we define the relation $\sim$ on $V(G)$ by $u \sim v$ if and only if $v = \phi^j(u)$ for some nonnegative integer $j$. It follows that $\sim$ is an equivalence relation on $V(G)$, and the equivalence classes are called the \emph{orbits} of $\phi$. The orbit associated with the vertex $i$ is denoted $\so_\phi(i)$. 
\end{remark}
Here, as in \cite{BFW} we consider those matrices $M=M(G)$ associated with a graph $G$ whose structure mimics the symmetries of the graph.
\begin{defn}\label{def:autocomp}\textbf{(Automorphism Compatible)}
Let $G$ be a graph on $n$ vertices. An $n \times n$ matrix $M$ is \emph{automorphism compatible} on $G$ if, given any automorphism $\phi$ of $G$ and any $i, j \in \{1, 2, \ldots, n\}$,
$M_{\phi(i) \phi(j)} = M_{i j}$.
\end{defn}

Some of the most well-known matrices that are associated with a graph are automorphism compatible. This includes the adjacency matrix, combinatorial Laplacian matrix, signless Laplacian matrix, normalized Laplacian matrix, and distance matrix of a simple graph. Additionally, the weighted adjacency matrix of a weighted graph is automorphism compatible. (See Proposition 3.4, \cite{BFW}.)

If $M=M(G)$ is an automorphism compatible matrix, $M$ can be decomposed over an automorphism $\phi$ of $G$ into a number of smaller matrices if $\phi$ is a basic automorphism.

\begin{defn} \textbf{(Basic Automorphism)}
If $\phi$ is an automorphism of a graph $G$ with orbits of size $k >1$ and possibly 1, then $\phi$ is a \textit{basic automorphism} of $G$ with orbit size $k$. Any vertices with orbit size 1 are said to be \emph{fixed} by $\phi$.
\end{defn}

Given a basic automorphism $\phi$ with orbit size $k$, we form a set by choosing one vertex from each orbit of size $k$. We call this set $\cT_0$ of vertices a \textit{semi-transversal} of the orbits of $\phi$. Further we define the set
\begin{equation}\label{eq:tranversal}
\cT_\ell = \{\phi^\ell(v) \ | \ v \in \cT_0\}
\end{equation}
for $\ell = 0,1, \ldots, k-1$ to be the $\ell$th power of $\cT_0$ and we let $M[\mathcal{T}_i,\mathcal{T}_j]$ be the submatrix of $M$ whose rows are indexed by $\mathcal{T}_i$ and whose columns are indexed by $\mathcal{T}_j$. This notion of a semi-transversal allows us to decompose an automorphism compatible matrix $M=M(G)$ in the following way.

\begin{thm}\label{thm:2} \textbf{(Basic Equitable Decomposition)} \cite{BFW}
Let $G$ be a graph on $n$ vertices, let $\phi$ be a basic automorphism of $G$ of size $k>1$, let $\cT_0$ be a semi-transversal of the $k$-orbits of $\phi$, let $\cT_f$ be the vertices fixed by $\phi$, 
 and let $M$ be an automorphism compatible matrix on $G$.  Set $F = M[\cT_f,\cT_f]$, $H = M[\cT_f,\cT_0]$, $L=M[\cT_0,\cT_f]$, $M_m = M[\cT_0, \cT_m]$, for $m = 0, 1, \ldots, k-1$, $\omega = e^{2 \pi i /k}$, and
\begin{equation}\label{eq:B}
B_j = \sum_{m=0}^{k-1} \omega^{jm} M_m,  \ \ j = 0, 1, \ldots, k-1.
\end{equation}
Then there exists an invertible matrix $S$ that can be explicitly constructed such that
\begin{equation}\label{eq:spectrum2}
S^{-1}MS=M_{\phi}\oplus B_1\oplus B_2\oplus\cdots B_{k-1}
\end{equation}
where $M_\phi=\left[\begin{array}{rr} F & kH \\ L & B_0 \end{array}\right].$ Thus
$\sigma(M) = \sigma\left(M_\phi \right)
\cup \sigma(B_1) \cup \sigma(B_2) \cup \cdots \cup \sigma(B_{k-1})$.
\end{thm}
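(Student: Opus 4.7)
The plan is to translate the hypothesis that $M$ is automorphism compatible on $G$ into a commutation relation with the permutation matrix $P_\phi$ of $\phi$, and then simultaneously block-diagonalize $M$ and $P_\phi$ using a block-Fourier transform tailored to the orbits of $\phi$. Since $M_{\phi(i)\phi(j)} = M_{ij}$ is equivalent to $P_\phi^{-1} M P_\phi = M$, the matrix $M$ is invariant under the cyclic action of $\phi$, and the correct change of basis to exploit this is a (non-unitary) normalization of the discrete Fourier transform.

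First I would reorder the vertices in the block order $\cT_f, \cT_0, \cT_1, \ldots, \cT_{k-1}$. Automorphism compatibility then forces a very rigid block structure: $M[\cT_f,\cT_f]=F$, and because $\phi$ fixes $\cT_f$ pointwise while sending $\cT_\ell \to \cT_{\ell+1}$, one has $M[\cT_f,\cT_\ell]=H$ and $M[\cT_\ell,\cT_f]=L$ for every $\ell$. The same shift argument shows $M[\cT_a,\cT_b]=M_{b-a \bmod k}$, so the orbit-to-orbit part of $M$ is a $k\times k$ block-circulant matrix with blocks $M_0,M_1,\ldots,M_{k-1}$.

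Next I would construct $S$ explicitly. Let $n_0=|\cT_0|$ and let $\Omega$ be the $k\times k$ matrix with entries $\Omega_{\ell j}=\omega^{\ell j}$, so that $(\Omega^{-1})_{j\ell}=\omega^{-j\ell}/k$; set
\begin{equation*}
S = \begin{pmatrix} I_{|\cT_f|} & 0 \\ 0 & \Omega \otimes I_{n_0} \end{pmatrix}.
\end{equation*}
Then I would compute $S^{-1}MS$ block by block. The fixed-to-orbit row $(H,H,\ldots,H)$ becomes, in column $j$, the block $H\sum_\ell \omega^{\ell j}=k H\,\delta_{j,0}$; the orbit-to-fixed column becomes, in row $j$, the block $(1/k)\sum_\ell \omega^{-j\ell} L=L\,\delta_{j,0}$; and the standard block-DFT identity $\sum_a \omega^{(j'-j)a}=k\delta_{j,j'}$ shows that the block-circulant part becomes block-diagonal with diagonal blocks $B_j=\sum_m \omega^{jm}M_m$. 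Collecting these computations yields exactly
\begin{equation*}
S^{-1}MS = \begin{pmatrix} F & kH \\ L & B_0 \end{pmatrix} \oplus B_1 \oplus \cdots \oplus B_{k-1} = M_\phi \oplus B_1 \oplus \cdots \oplus B_{k-1},
\end{equation*}
and the spectrum statement follows from similarity and the block-diagonal form.

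The main obstacle I anticipate is the bookkeeping involved in the off-diagonal block computations and, in particular, fixing the correct non-unitary normalization of $\Omega$ so that the factor $kH$ (rather than $\sqrt{k}\,H$ or $H$) appears in $M_\phi$. Once the Fourier root-of-unity identities $\sum_\ell \omega^{\ell j}=k\delta_{j,0}$ are applied consistently to the three distinct block types (fixed-fixed, mixed, and block-circulant), the diagonalization falls out directly and the rest is routine verification.
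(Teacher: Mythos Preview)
Your proposal is correct and matches the paper's approach essentially line for line: the paper does not reprove Theorem~\ref{thm:2} (it is cited from \cite{BFW}), but the explicit $S$ it uses later---in the proof of Theorem~\ref{eigvec} and in Equation~\eqref{eq:circulant}---is exactly your $I_{|\cT_f|}\oplus(\Omega\otimes I_{n_0})$ applied to the same block-circulant form of $M$. The non-unitary normalization you flag is also the one the paper adopts, producing the $kH$ block in $M_\phi$.
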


The decomposition in Equation \eqref{eq:spectrum2} is referred to as an \emph{equitable decomposition} of $M$ associated with the automorphism $\phi$. The reason for this is that this decomposition is related to an equitable partition of the graph $G$.

\begin{defn}\label{def:EP}\textbf{(Equitable Partition)}
An \emph{equitable partition} of a graph $G$ and a matrix $M$ associated with $G$, is a partition $\pi$ of $V(G)$, $V(G) = V_1 \cup \ldots \cup V_k$ which has the property that for all $i$, $j \in \{1, 2, \ldots, k\}$
\begin{equation}\label{eq:1}
\sum_{t \in V_j} M_{st} = D_{ij}
\end{equation}
is a constant $D_{ij}$ for any  $s \in V_i$. The $k \times k$ matrix $M_\pi = D
$ is called the \defital{divisor matrix} of $M$ associated with the partition $\pi$.
\end{defn}

Definition \ref{def:EP} is, in fact, an extension of the standard definition of an equitable partition, which is defined for \emph{simple graphs}. For such graphs the requirement that $\pi$ be an equitable partition is equivalent to the condition that any vertex $\ell \in V_i$ has the same number of neighbors in $V_j$ for all $i,j \in \{1, \ldots, k\}$ (for example, see p. 195-6 of  \cite{Godsil}).

An important fact noted in \cite{BFW} is that, if $\phi$ is a basic automorphism of $G$ and $M$ is an automorphism compatible matrix associated with $G$, the orbits of $\phi$ form an equitable partition of $V(G)$ (see Proposition 3.2, \cite{BFW}). If $M$ is equitably decomposed over the basic automorphism $\phi$ as in Equation \eqref{eq:spectrum2}, the matrix $M_\phi$ in the resulting decomposition is in fact the divisor matrix $D$ associated with the equitable partition induced by $\phi$ (see Theorem 4.4, \cite{BFW}), which is the reason this decomposition is referred to as an equitable decomposition.

If $\phi$ is an automorphism in which every orbit has the same size $k>1$ then $\phi$ is referred to as a \emph{uniform automorphism} of \emph{size} $k$. Any uniform automorphism is clearly a basic automorphism in the sense that it is a basic automorphism that fixes no vertices. Thus, Theorem \ref{thm:2} holds for uniform automorphisms as well, in which case the divisor matrix $M_{\phi}=B_0$.

If a graph $G$ has a non-basic automorphism $\phi$, the current theory of equitable decompositions does not directly allow us to decompose a matrix $M=M(G)$ over $\phi$. In the following section we show that an automorphism compatible matrix $M$ can be decomposed with respect to any separable automorphism $\phi$ of $G$ via a sequence of basic automorphisms.

\section{Equitable Partitions using Separable Automorphisms}\label{sec:3}

Many graph automorphisms are not basic automorphisms. In this section we will demonstrate how to equitably decompose a matrix with respect to an arbitrary separable automorphism by repeated use of Theorem \ref{thm:2}. Here, a \emph{separable automorphism} $\phi$ of a graph $G$ is an automorphism whose order $|\phi|=p_0p_1\dots p_h$ where $p_0,p_1,\dots,p_h$ are distinct primes. Before we can describe an equitable decomposition over a separable automorphism we first need the following propositions and algorithm.



\begin{remark}\label{rem:graphdecomp}
Notice that if {$B=M_{\phi}\oplus B_1\oplus \cdots\oplus B_{k-1}$} is the equitable decomposition of a matrix $M$ with respect to $\phi$, then we may view $B$ as the weighted adjacency matrix for a new graph $\tilde{G}$  with the same vertex set as $G$.
In the proofs of Theorems 3.8 and 4.4 of \cite{BFW} the rows and columns of the matrix $M$ are labeled in the order $\cU, \cT_0, \ldots, \cT_{k-1}$.  We continue this \emph{row/column labeling}, so that the labeling for the divisor matrix $M_\phi$ follows the ordering $\cU,\cT_0$, and for all remaining matrices $B_m$ in the decomposition the labeling follows $\cT_m$.
\end{remark}

\begin{prop}\label{prop:dd}
Let $\phi$ be an automorphism of order $pq$ with $p$ prime and $p \nmid q$ of 
a graph $G=(V,E,w)$ with automorphism compatible matrix $M$.  
Then $\psi = \phi^{q}$ is a basic automorphism of $G$ with order $p$ {and} we can construct an automorphism $\tilde{\phi}$ associated with the equitable decomposition of $M$ over $\psi$ of order $q$ such that the divisor matrix $(M_\psi)_{\tilde{\phi}} = M_\phi$.
\end{prop}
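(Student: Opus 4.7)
The plan is as follows. First I would verify that $\psi = \phi^q$ is a basic automorphism of order $p$. Since $\psi^p = \phi^{pq}$ is the identity, the order of $\psi$ divides the prime $p$. If $|\psi| = 1$, then $\phi^q$ is the identity, forcing $|\phi| = pq$ to divide $q$, which is impossible for $p \ge 2$. Hence $|\psi| = p$, so every $\psi$-orbit has size $1$ or $p$, and since $\psi$ is not the identity at least one $\psi$-orbit is nontrivial; thus $\psi$ is basic.

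Next I would construct $\tilde\phi$ on the vertex set $\tilde V_\psi = \cU \cup \cT_0$ of the graph underlying $M_\psi$, where $\cU$ denotes the set of $\psi$-fixed vertices and $\cT_0$ a fixed semi-transversal of the nontrivial $\psi$-orbits. The linchpin is that $\phi$ and $\psi = \phi^q$ commute, so $\phi$ permutes the $\psi$-orbits of $G$; in particular $\phi(\cU) = \cU$. I would then set $\tilde\phi(u) = \phi(u)$ for $u \in \cU$, and for $u \in \cT_0$ let $\tilde\phi(u)$ be the unique element of $\cT_0$ lying in $\so_\psi(\phi(u))$, which takes the form $\psi^{-a(u)}\phi(u)$ for a unique $a(u) \in \{0,\dots,p-1\}$. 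This yields a bijection of $\tilde V_\psi$.

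To see $\tilde\phi$ has order $q$, I would stratify $V(G)$ by $\phi$-orbit size, which divides $pq$ and so lies in $\{1,p,q,pq\}$. A short check using that $\psi$-orbit sizes are $1$ or $p$ shows that vertices from $\phi$-orbits of sizes $1$ and $p$ contribute $\tilde\phi$-fixed points of $\tilde V_\psi$, while those from $\phi$-orbits of sizes $q$ and $pq$ contribute $\tilde\phi$-orbits of size $q$ (the former inside $\cU$, the latter via their $q$ representatives in $\cT_0$). To verify $\tilde\phi$ is an automorphism of the graph with weighted adjacency $M_\psi = \bigl(\begin{smallmatrix} F & pH \\ L & B_0 \end{smallmatrix}\bigr)$, I would check each block in turn: for $F$ this reduces to $\phi$ being an automorphism on $\cU$; for $H$ and $L$ the $\psi^{-a}$ twist is absorbed using that $\psi$ is itself an automorphism and vertices of $\cU$ are $\psi$-fixed; and for $B_0$, writing $(B_0)_{uv} = \sum_{w \in \so_\psi(v)} M_{u,w}$, a direct manipulation moves the twists through the sum and leverages $\phi^{-1}(\so_\psi(\phi(v))) = \so_\psi(v)$, a consequence of $\phi\psi = \psi\phi$.

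Finally, to show $(M_\psi)_{\tilde\phi} = M_\phi$, I would use that the $\tilde\phi$-orbits of $\tilde V_\psi$ biject naturally with the $\phi$-orbits of $V(G)$ via the stratification above, inducing matching indexings on the two divisor matrices. I would then verify entry-wise equality by case analysis on the pair of $\phi$-orbit sizes. I expect the main obstacle here to be the bookkeeping of multiplicative factors: $M_\psi$ already carries a factor $p$ in its $H$-block, and $(M_\psi)_{\tilde\phi}$ introduces an additional factor $q$ in its own upper-right block, so in cases such as a $\phi$-fixed vertex paired with a $\phi$-orbit of size $pq$ these must combine to $pq = |\phi|$, which is precisely the multiplicity demanded by the corresponding entry of the divisor matrix $M_\phi$.
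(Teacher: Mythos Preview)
Your construction differs substantively from the paper's. Rather than projecting $\phi$ back to an arbitrary semi-transversal, the paper \emph{chooses} $\cT_0$ to be $\phi^p$-stable (by taking, for each $\phi$-orbit of size $pq_a$, the full set $\{a,\phi^p(a),\dots,\phi^{(q_a-1)p}(a)\}$) and then simply sets $\tilde\phi=\phi^p$ as a map on all of $V(G)$. This buys several things at once: $\tilde\phi$ is automatically an automorphism of $G$, hence automorphism compatible with $M$, so the verification that it respects each block $B_m$ of the \emph{full} decomposition $B=M_\psi\oplus B_1\oplus\cdots\oplus B_{p-1}$ is a short three-line computation; the order is immediately $q$ from $(\phi^p)^d=\mathrm{id}\iff q\mid d$; and the divisor identity $(M_\psi)_{\tilde\phi}=M_\phi$ follows from the single nested-orbit formula $\so_\phi(b)=\bigcup_{t\in\so_{\tilde\phi}(b)}\so_\psi(t)$, which replaces your block-by-block case analysis and factor bookkeeping entirely.

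Your route can be made to work, but as written it has two gaps. First, the proposition asks for an automorphism of the equitable decomposition of $M$ over $\psi$, which (per Remark~\ref{rem:graphdecomp}) means of the full block matrix $B$, not just of $M_\psi$; this is exactly what the iterative algorithm in Theorem~\ref{thm:3} needs, since the next round decomposes all of $M(i+1)$, including the $B(i)_j$ blocks. You only define and verify $\tilde\phi$ on $\cU\cup\cT_0$. Extending your projection to each $\cT_m$ and checking compatibility with $B_m=\sum_j\omega^{mj}M_j$ is doable but requires the roots of unity to survive the twist, which you have not addressed. Second, your stratification of $\phi$-orbit sizes into $\{1,p,q,pq\}$ is incorrect: the hypothesis is only that $p$ is prime and $p\nmid q$, so $q$ may be composite and any divisor of $pq$ can occur as an orbit size. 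The order computation still goes through (the $\tilde\phi$-orbit sizes are precisely the $p$-free parts of the $\phi$-orbit sizes, whose $\mathrm{lcm}$ is $q$), but your argument as stated does not establish it.
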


\begin{proof}
Let $M$  and $\phi$ be as described in Proposition \ref{prop:dd}.
For ease of notation, let $M(i,j)=M_{ij}$, the $ij^{th}$ element of $M$. Certainly, the automorphism $\psi=\phi^{q}$ must have order $p$ implying that $\psi$ is a basic automorphism.

In order to perform an equitable decomposition with respect to $\psi$, we choose a semi-transversal $\cT_0$ in the following way: For each orbit of $\phi$ that is not fixed by $\psi$, pick an element $a$.  Then $|a| = pq_a$ for some $q_a \in \mathbb{Z}_{>0}$. We add the elements of the set $ \{ a, \phi^{p}(a), \phi^{2p}(a), \ldots, \phi^{(q_a-1)p}(a) \}$ to $\cT_0$. We let $\mathcal{U}$ denote the set of vertices fixed by $\psi$. 

To see that $\cT_0$ is a semi-transversal, notice that the element $a$ gives $q_a$ orbits under $\psi$ and there are $q_a$ elements in the set listed above.  We now show that the elements in the above set must come from different orbits.  Suppose that $\phi^{\eta p}(a)$ and $\phi^{\eta' p}(a)$ (with $0< |\eta - \eta'| < q_a$) are in the same $\psi$-orbit, then for some integer $s<p$,
$\phi^{\eta p}(a) =\psi^s  \phi^{\eta' p}(a) = \phi^{q_as + \eta'p}(a)$. Thus, $q_a \ | \ (\eta - \eta')$, a contradiction.

Now we define a map $\tilde\phi = \phi^p$, and notice that $\tilde\phi (\cT_m) \subseteq \cT_m$.
Recall that the decomposed matrix $B= M_\psi \oplus B_1 \oplus \cdots \oplus B_{p-1}$ guaranteed by Theorem 4.4 of \cite{BFW}, will have row and column order agreeing with the vertex order $\cU, \cT_0, \cT_1, \ldots, \cT_{p-1}$. 
Thus, to show that $\tilde\phi$ is an automorphism of $B$, we need only demonstrate that each $\tilde\phi|_{\cT_m}$ is an automorphism on $B_m$ (see Theorem \ref{thm:2}).
Recall that
\[
B_m = \sum_{j=0}^{p-1} \omega^{mj} M[\cT_0,\cT_j],
\]
Thus, if $a, b \in \cT_m$, and we wish to calculate the $(a,b)$ entry in $B_m$, we must examine the corresponding entries in $M$ which come from $\cT_0$ and $\cT_j$. This is expressed in the first equality below:
\begin{align*}
B_m(\tilde\phi|_{\cT_m}(a),\tilde\phi|_{\cT_m}(b))=\sum_{j=0}^{p-1}\omega^{mj}
M(\psi^{-m}\tilde\phi(a), \psi^{j-m}\tilde\phi(b))
&=\sum_{j=0}^{p-1}\omega^{mj} M(\phi^{p}\psi^{-m}(a), \phi^{p}\psi^{j-m}(b))\\
=\sum_{j=0}^{p-1}\omega^{mj} M(\psi^{-m}(a), \psi^{j-m}(b))&= B_m(a,b)
\end{align*}
where the second equality holds because $\psi = \phi^q$, and the third equality is the defining property of automorphism compatible matrices. Thus,  $\tilde\phi$ is an automorphism on each $B_m$ and subsequently, $\tilde{\phi} $ is an automorphism on $B$, the decomposition of $M$.

The equality below similarly shows that $\tilde \phi$ is an automorphism of the vertices $(a,b)$ that appear in $M_\psi$ (those in $\cU \cup \cT_0$).
\[
M_\psi(\tilde\phi(a), \tilde\phi(b)) = M_\psi(\phi^p (a),\phi^p(b)) =
\sum_{m = 1}^{|\so_\psi(b)|} M(\phi^p(a), \phi^{qm + p}(b)) =
\sum_{m = 1}^{|\so_\psi(b)|} M((a), \phi^{qm}(b)) = M_\psi(a,b).
\]

To show the final equality in Proposition \ref{prop:dd}, we consider a semi-transversal $\widetilde{\cT}_0$ of $\tilde \phi$ and the set of vertices fixed by $\tilde \phi$ (which we call $\widetilde \cU$).  Then for vertices $a$ and $b$ in $\widetilde\cU \cup \widetilde{\cT}_0$ we find that 

$$\begin{array}{l}
\ds{M_\phi (a,b)=\sum_{s\in \so_\phi(b)}M(a,s)=\sum_{m\in \so_{\tilde{\phi}}(b)}\left( \sum_{s\in\so_\psi(m)}M(a,s)\right)} \ds{ \ =\sum_{s\in\so_{\tilde{\phi}}(b)} M_{\psi}(a,s)=(M_{\psi})_{\tilde{\phi}}(a,b)}.
\end{array}$$

The second equality holds because $$\so_\phi(s)=\bigcup_{t\in\so_{\tilde{\phi}}(S)} \so_\psi(t)$$ 
Hence, $M_{\phi}=(M_{\psi})_{\tilde{\phi}}$.

\end{proof}

Thus, for any automorphism $\phi\in Aut(G)$ of order $pq = \ell$ where $p$ is prime and $p \nmid q$,  we can equitably decompose an automorphism compatible matrix $M$ over  $\phi^{q}$ and subsequently create another automorphism $\tilde{\phi}$ associated with the decomposed matrix. In fact, if $\phi$ is separable then we can repeat this process until we exhaust each of the distinct prime factors $p_0,p_1,\dots,p_h$ of $\ell$ where $\ell=p_0p_1\cdots p_h$ is the order of $\phi$. This decomposition of the matrix $M$, is summarized in the following proposition.

\begin{thm}\label{thm:3} \textbf{(Equitable Decompositions Over Separable Automorphisms)}
Let $\phi$ be any separable automorphism of a graph $G$ with automorphism compatible matrix $M$. Then there exists basic automorphisms $\psi_1,\dots,\psi_{h}$ that induce a sequence of equitable decompositions on $M$, such that the divisor matrix
\[
M_\phi = (\dots(M_{\psi_0})_{\psi_1} \ldots)_{\psi_h}.
\]
\end{thm}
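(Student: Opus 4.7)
The plan is to induct on $h$, the number of distinct prime factors of the order of $\phi$, using Proposition \ref{prop:dd} repeatedly to peel off one prime at a time. Write $|\phi| = p_0 p_1 \cdots p_h$ with the $p_i$ distinct primes.

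For the base case $h = 0$, the automorphism $\phi$ has prime order $p_0$, so every nontrivial orbit has size $p_0$ and $\phi$ is itself basic. Setting $\psi_0 = \phi$ gives $M_\phi = M_{\psi_0}$ trivially, and there is nothing further to do.

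For the inductive step, assume the result holds for any separable automorphism with $h$ distinct prime factors. Given $\phi$ with $|\phi| = p_0 p_1 \cdots p_h$, set $p = p_0$ and $q = p_1 \cdots p_h$, and note that $p$ is prime with $p \nmid q$ since the $p_i$ are distinct. By Proposition \ref{prop:dd}, $\psi_0 := \phi^{q}$ is a basic automorphism of $G$ of order $p$, the equitable decomposition of $M$ over $\psi_0$ yields a matrix on which $\tilde{\phi} := \phi^{p}$ acts as an automorphism of order $q = p_1 \cdots p_h$, and moreover $(M_{\psi_0})_{\tilde{\phi}} = M_\phi$. Since $\tilde{\phi}$ is a separable automorphism whose order has exactly $h$ distinct prime factors (acting on the graph $\tilde{G}$ whose weighted adjacency matrix is the decomposition of $M$ over $\psi_0$, per Remark \ref{rem:graphdecomp}), the inductive hypothesis produces basic automorphisms $\psi_1, \ldots, \psi_h$ inducing a sequence of equitable decompositions of $M_{\psi_0}$ with
\[
(M_{\psi_0})_{\tilde{\phi}} = (\dots(M_{\psi_0})_{\psi_1} \ldots)_{\psi_h}.
\]
Combining this with $M_\phi = (M_{\psi_0})_{\tilde{\phi}}$ gives the desired identity.

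The main obstacle is not the induction itself but ensuring at each stage that the induced map $\tilde{\phi}$ can be legitimately treated as an automorphism of an automorphism-compatible matrix on a graph to which Proposition \ref{prop:dd} (and hence Theorem \ref{thm:2}) applies again. This is exactly what Proposition \ref{prop:dd} guarantees, together with Remark \ref{rem:graphdecomp} which lets us interpret the decomposed matrix as a weighted adjacency matrix on a graph with the same vertex set, so that separability of $\tilde{\phi}$ (its order being $p_1 \cdots p_h$, a product of distinct primes) and the compatibility needed to iterate both follow directly. The final equality of divisor matrices then telescopes through the $h+1$ applications.
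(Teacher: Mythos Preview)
Your proof is correct and is essentially the same as the paper's: the paper's proof is the single sentence ``This follows from repeated use of Proposition \ref{prop:dd},'' and your induction on $h$ simply makes that repetition explicit, with Remark \ref{rem:graphdecomp} supplying the graph on which $\tilde\phi$ acts at each stage.
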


\begin{proof}
This follows from repeated use of Proposition \ref{prop:dd}.
\end{proof}


We now give an algorithm for decomposing a graph with respect to any of its automorphisms.

\begin{center}
\emph{Performing Equitable Decompositions using Separable Automorphisms}
\end{center}

\begingroup\raggedright\leftskip=20pt\rightskip=20pt

For a graph $G$ with automorphism compatible matrix $M$ and separable automorphism $\phi$ of order $\ell $ with prime factorization ${\ell=p_0p_1 \cdots p_h}$, set $M(0)  = M$, $\ell_0 = \ell$, and $\phi_0 = \phi$.  We perform $h+1$ sequential decompositions of $M$, one for each prime in our factorization.  Thus we will run through Steps a-c $h+1$ times to fully decompose the matrix.  
To begin we start with $i = 0$, and move to \emph{Step a.}\m

\vspace{0.1in}


\noindent\emph{\textbf{Step a:}} \emph{Let $\ell_{i+1} = \ell_i/p_i$.  Form the basic automorphism $\psi_{i} = \phi_i^{\ell_{i+1}}$ of order $p_i$}.\\

\noindent\emph{\textbf{Step b:}} \emph{Perform an equitable decomposition of $M(i)$ over $\psi_i$ as in Theorem \ref{thm:2} by choosing a semi-transversal $\mathcal{T}_0$ of the $p_i$-orbits of $\psi_i$ according the the method set out in Proposition \ref{prop:dd} and setting $\mathcal{U}$ to be the set of all {vertices} fixed by $\psi_i$. Let $\tM(i)$ be the matrix obtained from $M(i)$ by permuting the rows and columns to agree with the new vertex order: $\cU, \cT_0, \cT_1, \ldots, \cT_{p_i-1}$. Then define 
\[M(i+1)=S\tM(i)S^{-1}=\tM(i)_{\psi_i}\oplus B(i)_1 \oplus B(i)_2\oplus\dots\oplus B(i)_{p_i-1}\]}


\noindent\emph{\textbf{Step c:}} \emph{Define $\phi_{i+1}=\tilde{\phi}_i =(\phi_i)^{p_i}$ as described in the proof of Proposition \ref{prop:dd}. If $i < h$, then set $i = i+1$ and return to \emph{Step a}.  Otherwise, the decomposition is complete.}



\par\endgroup

\vspace{0.1in}

\begin{remark}
Each occurrence of {\it Step b} requires choosing a semi-transversal $\cT_0$ and setting up a new fixed set $\cU$ (determined by $\psi_i$).  By slight abuse of notation we will simply reuse the same notation for each `round', forgetting the previously used semi-transversals and fixed vertex sets. 
\end{remark}

\noindent The procedure described in Steps $a$--$c$ allows one to sequentially decompose a matrix $M$ over any of its separable automorphisms. By extension we refer to the resulting matrix as an \emph{equitable decomposition} of $M$ over $\phi$. The following example illustrates an equitable decomposition over a separable automorphism that is not basic.

\begin{figure}
\[
\raisebox{-23mm}{\begin{overpic}[scale=.2]{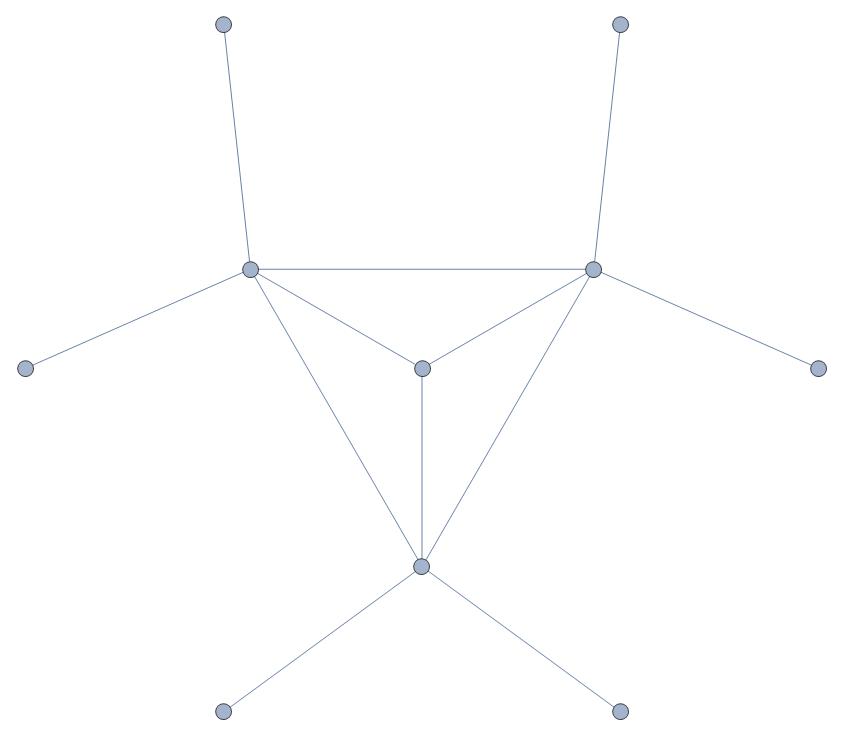}
    \put(52.5,40){\scriptsize$1$}
    \put(26.5,55.5){\scriptsize$5$}
	\put(2.75,45){\scriptsize$7$}
    \put(26,86){\scriptsize$6$}
    \put(72,55.5){\scriptsize$8$}
    \put(73,86){\scriptsize$10$}
    \put(96.5,45){\scriptsize$9$}
    \put(53,19.5){\scriptsize$2$}
    \put(72.5,-2){\scriptsize$4$}
    \put(25,-2){\scriptsize$3$}
    \end{overpic}}
\qquad
\begin {aligned}
A = \left[ {\begin{matrix}
   0 & 1 & 0 & 0 & 1 & 0 & 0 & 1 & 0 & 0  \\
   1 & 0 & 1 & 1 & 1 & 0 & 0 & 1 & 0 & 0  \\
   0 & 1 & 0 & 0 & 0 & 0 & 0 & 0 & 0 & 0  \\
   0 & 1 & 0 & 0 & 0 & 0 & 0 & 0 & 0 & 0  \\
   1 & 1 & 0 & 0 & 0 & 1 & 1 & 1 & 0 & 0  \\
   0 & 0 & 0 & 0 & 1 & 0 & 0 & 0 & 0 & 0  \\
   0 & 0 & 0 & 0 & 1 & 0 & 0 & 0 & 0 & 0  \\
   1 & 1 & 0 & 0 & 1 & 0 & 0 & 0 & 1 & 1  \\
   0 & 0 & 0 & 0 & 0 & 0 & 0 & 1 & 0 & 0  \\
   0 & 0 & 0 & 0 & 0 & 0 & 0 & 1 & 0 & 0  \\
 \end{matrix} } \right]
\end {aligned}
\]
\caption{The graph $G$ considered in Example \ref{ex:sequence} with automorphism $\phi=(2,5,8)(3,6,9,4,7,10)$ and adjacency matrix $A=A(G)$.}\label{fig:ex1}
\end{figure}

\begin{example}\label{ex:sequence}
Consider the graph $G$ in Figure \ref{fig:ex1} whose adjacency matrix $A=A(G)$ is also shown, which has the separable automorphism \begin{equation}\label{eq:triangles}
\phi = (2,5,8)(3,6,9,4,7,10).
\end{equation}
The automorphism $\phi$ has order $\ell=6 = 3\cdot2$. Since $\ell$ factors into two primes we will proceed through \emph{Steps a-c} two times to equitably decompose the adjacency matrix $A$ with respect to $\phi$. \s

\noindent \textbf{Round 1:}
Let $A(0) = A$, $\phi_0 = \phi$, $\ell_0 = 6$, and $p_0=3$.\s

\noindent \emph{Step a:} Note that $\ell_1 = 2$ and $\psi_0 =\phi_0^2=  (2,8,5)(3,9,7)(4,10,6)$, which  is a basic automorphism of order $p_0 = 3$.  \s

\noindent \emph{Step b:} To choose a semi-transversal $\mathcal{T}_0$ of $\psi_0$, we select vertices 2 and 3 from the orbits of $\phi_0$, and add $\phi_0^3(3) = 4$ to $\cT_0$ as well (following the method for choosing semi-transversals set out in Proposition \ref{prop:dd}.  Then $\mathcal{T}_1 = \{8,9,10\}$ and $\mathcal{T}_2 = \{5,7,6\}$, with $\mathcal{U} = \{1\}$. We let $\tA$ be the matrix obtained from $A$ by permuting the rows and columns to agree with the vertex order $\cU, \cT_0, \cT1, \cT_2 = 1, 2, 3, 4, 8, 9, 10, 5, 7, 6$ (in this case $\tA = A$).  Using Theorem \ref{thm:2}, we have
\begin{align*}
&\tA(0)_0=\tA\left[ \mathcal{T}_0,\mathcal{T}_0 \right] =\left[ \begin{matrix}
0&1&1 \\
1&0&0\\
1&0&0\end{matrix}\right], \ \tA(0)_1=\tA\left[ \mathcal{T}_0,\mathcal{T}_1 \right]=\left[ \begin{matrix} 1&0&0 \\ 0&0&0\\ 0&0&0\end{matrix}\right]=\tA(0)_2, \\ & \text{and} \
F(0)=\tA(0)\left[\mathcal{T}_f,\mathcal{T}_f \right]=\left[ \begin{matrix} 0 \end{matrix}\right], \ H(0)=L(0)^T=\left[ \begin{matrix} 1 & 0 & 0 \end{matrix}\right]
\end{align*}
from which the matrices
\begin{equation}\label{eq:7}
\tA(0)_{\psi_0}=\left[
\begin{matrix} F(0) & p_0H(0)   \\
   L(0) & B(0)_0
\end{matrix}\right]=\left[
\begin{matrix} 0 & 3 & 0 & 0  \\
   1 & 2 & 1 & 1  \\
   0 & 1 & 0 & 0   \\
   0 & 1 & 0 & 0
\end{matrix}\right], \
B(0)_1=B(0)_2=\left[
\begin{matrix}
   {-1} & 1 & 1   \\
      1 & 0 & 0  \\
      1 & 0 & 0
\end{matrix}\right] \ \text{and} \
A(1)=\left[
\begin{matrix}
   \tA(0)_{\psi_0} & 0 & 0   \\
      0 & B(0)_1 & 0  \\
      0 & 0 & B(0)_2
\end{matrix}\right]
\end{equation}
can be constructed.\s



\noindent \emph{Step c:} Next, we derive $\phi_1 = \tilde\phi_0 = (\phi_0)^{p_0} = (3,4) (6,7)(9,10)$. And notice that
\begin{gather}
\phi_1|_{\cT_0}=(3,4),~~~\phi_1|_{\cT_1}=(9,10),~~~\text{and}~~~\phi_1|_{\cT_2}=(6,7) \nonumber.
\end{gather}
where  $\phi_1$ is the automorphism associated with $A(1)$ guaranteed by Proposition \ref{prop:dd}. Since $\ell$ factors into two primes we proceed to Round 2.\s

\noindent \textbf{Round 2:} $A(1)$ and $\phi_1$ have been computed, $\ell_1 = 2$, and $p_1=2$.\s

\noindent \emph{Step a:} Since $\ell_2 = 1$ then $\psi_1 = \phi_1=(3,4)(6,7)(9,10)$, which is a basic automorphism of order $p_1 = 2$.  \s

\noindent \emph{Step b:} We choose the semi-transversal $\mathcal{T}_0 = \{3,6,9\}$ which causes $\mathcal{T}_1 = \{4,7,10\}$. Note that the set of fixed points is $\mathcal{U} = \{1,2,5,8\}$. 
Now, we create the matrix $\tA(1)$ from $A(1)$ by reordering the rows and columns to agree with the order $\cU, \cT_0, \cT_1 = 1,2,5,8, 3, 6, 9, 4, 7, 10$. 
By decomposing the matrix $\tA(1)$ as is Theorem \ref{thm:2} we have \s
\begin{align*}
&\tA(1)_0=\tA(0)\left[ \mathcal{T}_0,\mathcal{T}_0 \right] =\left[ \begin{matrix}
0&0&0 \\
0&0&0\\
0&0&0\end{matrix}\right], \
\tA(1)_1=\tA(0)\left[ \mathcal{T}_0,\mathcal{T}_1 \right]=\left[ \begin{matrix}
0&0&0 \\
0&0&0\\
0&0&0\end{matrix}\right], \\
&F(1)=\tA(0)\left[\mathcal{T}_f,\mathcal{T}_f \right]=\left[ \begin{matrix}
0&3&0&0 \\
1&2&0&0\\
0&0&-1&0 \\
0&0&0&-1 \end{matrix}\right], \text{and} \ \
H(1)=L(1)^T=\tA(0) [\cT_0, \cT_f] = \left[ \begin{matrix}
0&0&0 \\
1&0&0\\
0&1&0 \\
0&0&1 \end{matrix}\right]
\end{align*}
from which we can construct the matrices
\[B(1)_0=\left[ \begin{matrix}
0&0&0 \\
0&0&0\\
0&0&0\end{matrix}\right], \ B(1)_1=\left[ \begin{matrix}
0&0&0 \\
0&0&0\\
0&0&0\end{matrix}\right] \ \text{and} \
A(2)=\left[ \begin{matrix}
F(1)&p_1H(1)&0 \\
L(1)&B(1)_0&0\\
0&0&B(1)_1\end{matrix}\right].
\]

\noindent \emph{Step c:} The matrix $A(2)$ is now decomposed into blocks, and in this final step there is no need to find $\phi_2$ since the decomposition is complete.\\

Thus, our final decomposition is the matrix $A(2)$.  To see the block diagonal form of $A(2)$ we permute the rows and columns with a permutation matrix $P$ to put the associated vertices back in the original order. The result is the matrix
$$PA(2)P^{-1}= P\left[ \begin{array}{cccc|ccc|ccc}
   0 & 3 & 0 & 0 & 0 & 0 & 0 & 0 & 0 & 0  \\
   1 & 2 & 0 & 0 & 2 & 0 & 0 & 0 & 0 & 0  \\
   0 & 0 & -1& 0 & 0 & 2 & 0 & 0 & 0 & 0  \\
   0 & 0 & 0 & -1& 0 & 0 & 2 & 0 & 0 & 0  \\
   \hline
   0 & 1 & 0 & 0 & 0 & 0 & 0 & 0 & 0 & 0  \\
   0 & 0 & 1 & 0 & 0 & 0 & 0 & 0 & 0 & 0  \\
   0 & 0 & 0 & 1 & 0 & 0 & 0 & 0 & 0 & 0  \\
   \hline
   0 & 0 & 0 & 0 & 0 & 0 & 0 & 0 & 0 & 0  \\
   0 & 0 & 0 & 0 & 0 & 0 & 0 & 0 & 0 & 0  \\
   0 & 0 & 0 & 0 & 0 & 0 & 0 & 0 & 0 & 0  \\
   \end{array} \right]P^{-1}= \left[ \begin{array}{ccc|c|cc|c|cc|c}
   0 & 3 & 0 & 0 & 0 & 0 & 0 & 0 & 0 & 0  \\
   1 & 2 & 2 & 0 & 0 & 0 & 0 & 0 & 0 & 0  \\
   0 & 1 & 0 & 0 & 0 & 0 & 0 & 0 & 0 & 0  \\
   \hline
   0 & 0 & 0 & 0 & 0 & 0 & 0 & 0 & 0 & 0  \\
   \hline
   0 & 0 & 0 & 0 & -1& 2 & 0 & 0 & 0 & 0  \\
   0 & 0 & 0 & 0 & 1 & 0 & 0 & 0 & 0 & 0  \\
   \hline
   0 & 0 & 0 & 0 & 0 & 0 & 0 & 0 & 0 & 0  \\
   \hline
   0 & 0 & 0 & 0 & 0 & 0 & 0 & -1& 2 & 0  \\
   0 & 0 & 0 & 0 & 0 & 0 & 0 & 1 & 0 & 0  \\
   \hline
   0 & 0 & 0 & 0 & 0 & 0 & 0 & 0 & 0 & 0  \\
   \end{array} \right] $$

We can see in the final decomposition, that the (twice) decomposed divisor matrix is found in the first block $(A_{\psi_0})_{\psi_1}=\left[ \begin{matrix}
   0 & 3 & 0 \\
   1 & 2 & 2 \\
   0 & 1 & 0 \\
   \end{matrix} \right] $, which is the divisor matrix $A_\phi$ associated with the equitable partition of $A$ induced by $\phi$.
\end{example}

Before finishing this section we note that we have now stated two theorems regarding equitable decompositions of a graph over two types of automorphisms. In Theorem 4.4 of \cite{BFW} these were basic automorphisms and here in Theorem \ref{thm:3} we stated, and later showed, how a graph could be decomposed over any of its separable automorphisms. For the sake of unifying the theory of equitable decompositions we give the following corollary of these two theorems.

\begin{cor}\label{cor:4}
Let $G$ be a graph with automorphism compatible matrix $M$. If $\phi\in Aut(G)$ is either a basic or separable automorphism then there exists an invertible matrix $Q$ that can be explicitly constructed such that
\begin{equation*}
Q^{-1} M Q =  M_\phi \oplus B_1 \oplus \ldots \oplus B_{k}
\end{equation*}
for some $k\geq 1$. Hence, $\sigma(M)=\sigma(B_0)\cup\ldots\cup\sigma(B_{h})$. 
\end{cor}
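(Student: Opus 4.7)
The plan is to deduce the corollary by unifying the two prior decomposition results into a single similarity transformation, splitting into the two exhaustive cases on $\phi$. If $\phi$ is basic, the statement is essentially a restatement of Theorem \ref{thm:2}: I would simply take $Q=S$ from that theorem, set $k=|\phi|-1$, and observe that the resulting block decomposition $M_\phi\oplus B_1\oplus\cdots\oplus B_{k}$ is exactly the one produced there. The spectral conclusion then follows immediately from the multiplicativity of characteristic polynomials under direct sums (reading $B_0 := M_\phi$ to match the notation at the end of the statement).

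The more substantive case is when $\phi$ is separable of order $\ell=p_0p_1\cdots p_h$. Here I would invoke the algorithm laid out in Steps (a)--(c) following Theorem \ref{thm:3}. At each round $i$, Theorem \ref{thm:2} supplies an invertible matrix $S_i$ (together with a permutation $P_i$ that rearranges rows and columns into the ordering $\mathcal{U},\mathcal{T}_0,\ldots,\mathcal{T}_{p_i-1}$) such that $S_i^{-1}\widetilde{M}(i)S_i$ is block diagonal with blocks $\widetilde{M}(i)_{\psi_i},B(i)_1,\ldots,B(i)_{p_i-1}$, and Proposition \ref{prop:dd} guarantees that $\phi_{i+1}=\tilde\phi_i$ is an automorphism of the divisor block whose associated divisor matrix is exactly $M_\phi$ after all rounds are complete. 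I would then define
\[
Q \;=\; P_0 S_0 \bigl(I_{\mathcal U_0}\oplus P_1 S_1\bigl(I_{\mathcal U_1}\oplus \cdots (P_h S_h)\cdots\bigr)\bigr),
\]
where the identity factors pad out the transformations so they act only on the divisor block at each step. The composition of invertible matrices is invertible, so $Q$ is invertible; tracing through the rounds, $Q^{-1}MQ$ is a direct sum of the final divisor block $M_\phi$ together with all of the off-diagonal blocks $B(i)_j$ produced along the way.

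To finish, I would count the total number of non-divisor blocks: round $i$ contributes $p_i-1$ blocks $B(i)_1,\ldots,B(i)_{p_i-1}$ to the final decomposition, while the divisor component is passed on to round $i+1$ for further decomposition. A short inductive argument (which is the one real bookkeeping step) shows that after all $h+1$ rounds the number of blocks other than $M_\phi$ equals $k = p_0p_1\cdots p_h-1$, matching the count stated in the introduction's preview of Theorem \ref{thm:2} for separable decompositions. Relabelling these blocks as $B_1,\ldots,B_k$ yields the desired form, and applying the spectral identity at each round (or, equivalently, to the final direct-sum decomposition) yields $\sigma(M)=\sigma(M_\phi)\cup\sigma(B_1)\cup\cdots\cup\sigma(B_k)$.

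The main obstacle, as far as I can see, is not conceptual but notational: carefully composing the similarity transforms $S_i$ and permutations $P_i$ from successive rounds so that the product $Q$ acts coherently on the original matrix $M$, and verifying that the blocks $B(i)_j$ generated at earlier rounds really do survive intact (i.e., are not further altered) when later rounds act only on the current divisor block. Both facts follow from the observation already used in the proof of Proposition \ref{prop:dd} that $\tilde\phi_i$ preserves each $\mathcal{T}_m$, so each $S_{i+1}$ commutes with the block structure inherited from round $i$; once this is written down cleanly, the rest is an exercise in direct-sum bookkeeping.
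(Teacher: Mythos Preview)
The paper does not actually supply a proof for this corollary: it is simply stated as ``a corollary of these two theorems'' (Theorem~\ref{thm:2} for the basic case and Theorem~\ref{thm:3} together with the algorithm in Steps~(a)--(c) for the separable case), with the remark that $Q$ is built from the choices of prime ordering and semi-transversals made along the way. Your proposal is essentially a fleshed-out version of exactly this, and is correct in substance.

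There is one point worth flagging, however. You describe the later rounds as acting ``only on the current divisor block,'' with the blocks $B(i)_j$ from earlier rounds surviving intact. That is \emph{not} what the paper's algorithm does: in Step~(b), $\psi_{i+1}$ is applied to the full matrix $M(i+1)$, and Proposition~\ref{prop:dd} explicitly shows that $\tilde\phi_i$ is an automorphism of every $B_m$, not just of the divisor block (see Example~\ref{ex:sequence}, Round~2, where the semi-transversal $\{3,6,9\}$ draws one vertex from each of the three blocks produced in Round~1, so $B(0)_1$ and $B(0)_2$ are themselves further split). Your ``divisor-only'' variant is still a perfectly valid route to the corollary as stated---it produces \emph{some} invertible $Q$ and \emph{some} block-diagonal form containing $M_\phi$---but it is then inconsistent with your claimed count $k=p_0p_1\cdots p_h-1$: decomposing only the divisor at each round yields $k=\sum_{i}(p_i-1)$ non-divisor blocks, whereas it is the paper's full-matrix algorithm that produces $\prod_i p_i$ blocks in total. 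Since the corollary only asserts ``for some $k\ge 1$,'' this does not break your proof, but you should either drop the specific count or align your description of $Q$ with the paper's algorithm.
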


The term \emph{explicitly constructed} in this corollary does not mean that there is only one such $Q$ that can be used to decompose the matrix $M$. 
Rather, given an automorphism $\phi$ on $G$ we can construct $Q$ knowing only  $\phi$ by once we've chosen an ordering on the primes in the order of $\phi$,  and semi-transversals at each step, as has been demonstrated throughout this section of the paper.

\begin{remark}
If $\phi$ is any automorphism of a graph $G$ whose order has prime decomposition
\[
|\phi|=p_0^{N_0}p_1^{N_1}\dots p_h^{N_h},
\]
then $\psi=\phi^\ell$ for $\ell=p_0^{N_0-1}p_1^{N_1-1}\dots p_h^{N_h-1}$ is a separable automorphism of $G$ with order $|\psi|=p_0p_1\dots p_h$. Hence, any automorphism compatible matrix $M$ of $G$ can be equitably decomposed with respect to $\psi$. Consequently, knowledge of any automorphism of a $G$ can be used to equitably decompose the matrix $M$. In this case, we decompose over $\psi$, not the original automorphism $\phi$.  
\end{remark}

\section{Eigenvectors and Spectral Radii Under Equitable Decompositions}\label{sec:4}

 Recall that for basic equitable decompositions, as given in \cite{BFW}, it is necessary to use an automorphism whose nontrivial cycles must all have the same length (not necessarily prime).  For this reason, throughout the rest of the paper we will use $k$ instead of $p$ to denote the common, nontrivial cycle length of our automorphisms. 
 
 The theory of equitable decompositions presented in \cite{BFW} and in the previous section not only allows us to decompose a matrix $M$ over an associated graph symmetry but can also be used to decompose the eigenvectors of $M$.That is, if $M_{\phi}\oplus B_1\oplus\cdots\oplus B_{{k-1}}$ is an equitable decomposition of $M$ over some basic or separable $\phi\in Aut(G)$ then the eigenvectors of $M$ can be explicitly constructed from the eigenvectors of $M_{\phi},B_1,\dots,B_{{k-1}}$. {This same theory can be used to show that} the spectral radius of $M$ and its divisor matrix $M_{\phi}$ are equal if $M$ is both nonnegative and irreducible (see Proposition \ref{lem:3}).

Our first result in this section introduces the notion of an equitable decomposition of the eigenvectors and generalized eigenvectors of a matrix $M$ associated with a graph $G$.

\begin{thm}\label{eigvec}\textbf{(Eigenvector Decomposition)}
Let $M$ be an $n\times n$ automorphism compatible matrix of the graph $G$. For $\phi$ a basic automorphism of $G$ with $N$ orbits of size $1$ and all other orbits of size $k>1$ let $M_\phi \oplus B_1\oplus \cdots \oplus B_{k-1}$ be an equitable decomposition of $M$. For $r=(n-N)/k$ suppose $\{\mathbf{u}_{m,\ell}:1\leq \ell \leq r\}$  is a (generalized) eigenbasis for $B_m$  for $1\leq m\leq k-1$ and $\{{{\mathbf{u}}_{0,i}}:1\leq i\leq N+r\}$ is a (generalized) eigenbasis for $M_{\phi}$ and where each ${\mathbf{u}}_{0,i}= {{\mathbf{w}}_i} \oplus {{\mathbf{v}}_i}$ with ${{\mathbf{w}}_i}\in\mathbb{C}^N\text{ and }{{\mathbf{v}}_i}\in\mathbb{C}^r$. Then a (generalized) eigenbasis of $M$ is the set
\begin{equation}\label{eq:vect}
\left\{{{\bf{0}}_N}\, \oplus {\left[ {\mathop  \bigoplus \limits_{j = 0}^{k - 1} {\omega ^{m j}}{{\bf{u}}_{m,\ell}}} \right] ,{{\bf{w}}_i}\,\oplus \left[   {\mathop  \bigoplus \limits_{j = 0}^{k - 1} {{\bf{v}}_i}} \right]\,:1 \leq m \leq k - 1,1 \leq \ell \leq r,1 \leq i \leq N + r,\omega=e^{2\pi i/k}\,} \right\}.
\end{equation}
Moreover, if $\mathbf{x}_{m,l}={{\bf{0}}_N}\oplus \left[ {\mathop  \bigoplus \limits_{j = 0}^{k - 1} {\omega ^{mj}}{{\bf{u}}_{m,l}}} \right]$ and ${{\bf{x}}_{0,i}} =  {{\bf{w}}_i} \oplus \left[{\mathop  \bigoplus \limits_{j = 0}^{k - 1} {{\bf{v}}_i}} \right]$ then the following hold.\\
(i) If $\lambda_{m,\ell}$ is the $\ell^{th}$ eigenvalue of $B_m$ then $\lambda_{m,\ell}\in\sigma(M)$ corresponds to the (generalized) eigenvector $\mathbf{x}_{m,l}$.\\
(ii) If $\lambda_{0,i}$ is an eigenvalue of $M_\phi$ then $\lambda_{0,i}\in\sigma(M)$ corresponds to the (generalized) eigenvector ${{\bf{x}}_{0,i}}$.
\end{thm}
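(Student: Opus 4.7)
The plan is to reduce the statement to a direct computation using the change-of-basis matrix $S$ from Theorem \ref{thm:2}. By that theorem, there is an invertible $S$ with $S^{-1} M S = M_\phi \oplus B_1 \oplus \cdots \oplus B_{k-1}$. Since a similarity transformation sends (generalized) eigenvectors to (generalized) eigenvectors while preserving eigenvalues and Jordan-chain lengths, it suffices to (a) produce a (generalized) eigenbasis of the block-diagonal matrix from the given per-block bases and (b) compute its image under $S$ and verify that it matches \eqref{eq:vect}.

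For (a), the standard construction applies: for each block, embed a (generalized) eigenvector of that block into the appropriate slot with zeros elsewhere. Concretely, for $1 \le m \le k-1$ and $1 \le \ell \le r$, let $\mathbf{e}_{m,\ell}$ be $\mathbf{u}_{m,\ell}$ placed in the $B_m$ slot with zeros elsewhere; for $1 \le i \le N+r$, let $\mathbf{e}_{0,i}$ be $\mathbf{u}_{0,i} = \mathbf{w}_i \oplus \mathbf{v}_i$ placed in the $M_\phi$ slot with zeros in each $B_m$ slot. A straightforward count gives $(k-1)r + (N+r) = n$ vectors, so this is a basis of the correct cardinality.

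For (b), I would recall the explicit form of $S$ constructed in the proof of Theorem \ref{thm:2} (and the row/column convention in Remark \ref{rem:graphdecomp}). When rows and columns of $M$ are ordered as $\mathcal{U}, \mathcal{T}_0, \mathcal{T}_1, \ldots, \mathcal{T}_{k-1}$, the matrix $S$ acts as the identity on the $\mathcal{U}$ coordinates and as the inverse-DFT-type transform on the $\mathcal{T}_j$ blocks dictated by the identity $B_j = \sum_{m=0}^{k-1} \omega^{jm} M_m$. Applying $S$ to $\mathbf{e}_{m,\ell}$ for $m \ge 1$ distributes $\mathbf{u}_{m,\ell}$ across the $k$ orbit slots with phase factors $\omega^{mj}$ and leaves the $\mathcal{U}$ slot zero, yielding $\mathbf{0}_N \oplus \bigoplus_{j=0}^{k-1} \omega^{mj} \mathbf{u}_{m,\ell}$; applying $S$ to $\mathbf{e}_{0,i}$ produces $\mathbf{w}_i \oplus \bigoplus_{j=0}^{k-1} \mathbf{v}_i$ because the divisor-matrix coordinates $\mathbf{v}_i$ correspond to the zero-frequency (constant) component across the orbit. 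This exactly matches the set \eqref{eq:vect}, and both (i) and (ii) then follow immediately from $MS = S(M_\phi \oplus B_1 \oplus \cdots \oplus B_{k-1})$.

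The main obstacle I anticipate is the bookkeeping in computing $S \mathbf{e}_{m,\ell}$ and $S \mathbf{e}_{0,i}$ cleanly, since $S$ mixes the $N$ fixed coordinates with the $\mathcal{T}_0$-indexed coordinates of the divisor matrix (note the asymmetric block $\left[\begin{smallmatrix} F & kH \\ L & B_0\end{smallmatrix}\right]$ in $M_\phi$). The factor of $k$ in the $(F,kH)$ block means one has to be careful about which side of the DFT normalization $S$ encodes; I would verify this by tracing through a single row of $MS$ versus $S \cdot \mathrm{diag}(M_\phi, B_1, \ldots, B_{k-1})$ on the embedded basis vectors. For the generalized case, the argument is uniform: since $S^{-1}(M - \lambda I)^d S = (M_\phi - \lambda I)^d \oplus \bigoplus_{m=1}^{k-1}(B_m - \lambda I)^d$, generalized eigenvectors of each block pull back to generalized eigenvectors of $M$ of the same rank, so no extra work is required beyond the eigenvector computation.
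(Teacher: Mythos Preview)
Your proposal is correct and follows essentially the same route as the paper: build the block-diagonal (generalized) eigenbasis, push it through the explicit similarity $S$ from Theorem~\ref{thm:2}, and read off the formulas in \eqref{eq:vect}; the paper writes $S = I_N \oplus R$ with $R$ the $k\times k$ block DFT matrix and then computes $S\mathbf{e}_{m,\ell}$ and $S\mathbf{e}_{0,i}$ exactly as you describe. Your anticipated obstacle is a non-issue: because $S$ is block diagonal as $I_N \oplus R$, it does \emph{not} mix the $\mathcal{U}$ coordinates with the $\mathcal{T}_0$ coordinates, so the $kH$ block plays no role in the eigenvector computation and no DFT-normalization subtlety arises; also, linear independence of the image is immediate from invertibility of $S$ (the paper spells this out via $\det E = \det S \cdot \prod_j \det E_j$, but your implicit argument suffices).
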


\begin{proof}
Using the notation from \cite{BFW}, we use the basic automorphism to label the vertices of $G$ so that $M$ has the form
\begin{equation}\label{eq:circulant}
M = \left[\begin{array}{llllll}
F & H & H & H & \cdots & H \\
L & M_0 & M_1 & M_2 & \cdots & M_{k-1} \\
L & M_{k-1} & M_0 & M_1 & \cdots & M_{k-2} \\
L & M_{k-2} & M_{k-1} & M_0 & \cdots & M_{k-3} \\
\vdots & \vdots & \vdots & \vdots & & \vdots \\
L & M_1 & M_2 & M_3 & \cdots & M_0 \\
\end{array}\right],
\end{equation}
where $F$ is $N \times N$, $H$ is $N \times r$, $L$ is $r \times N$, and each $M_j$ is $r \times r$. Let $S = I_N \oplus R$ where
$$R = \left[ {\begin{matrix}
   I & I & I &  \cdots  & I  \\
   I & {\omega I} & {{\omega ^2}I} &  \cdots  & {{\omega ^{k - 1}}I}  \\
   I & {{\omega ^2}I} & {{\omega ^4}I} &  \ddots  & {{\omega ^{2(k - 1)}}I}  \\
    \vdots  &  \vdots  &  \ddots  &  \ddots  &  \vdots   \\
   I & {{\omega ^{k - 1}}I} & {{\omega ^{2(k - 1)}}I} &  \cdots  & {{\omega ^{{{(k - 1)}^2}}}I}  \\
 \end{matrix} } \right]$$
and $\omega$ is the $k^{th}$ root of unity. Now according to Theorem \ref{thm:2}, we can decompose $M$ as
\[
S^{-1} M S =
M_\phi\oplus B_1 \oplus B_2 \oplus \cdots \oplus B_{k-1}=B,
\]
\\
where $M_\phi=\left[\begin{array}{rr} F & kH \\ L & B_0 \end{array}\right]$.
Now let $\textbf{u}$ be a (generalized) eigenvector of $B$ corresponding to the eigenvalue $\lambda$, so that $\left( B - \lambda I \right)^t{\bf{u}} = \mathbf{0}$ for some positive integer $t$, where $t>1$ if $\textbf{u}$ is a generalized eigenvector.  We now consider the vector $S{\bf{u}}$ which has the property that
\begin{align*}
  \left( {M - \lambda I} \right)^t S{\bf{u}} & =  (SBS^{-1} - \lambda I)^t S{\bf{u}} \cr
  & =  (S(B - \lambda I)S^{-1})^t S{\bf{u}} \cr
    & =  (S(B - \lambda I)^t S^{-1})S{\bf{u}} \cr
     & =  S(B - \lambda I)^t {\bf{u}} \cr
  & = 0.
  \end{align*}
Thus, $S{\bf{u}}$ is a (generalized) eigenvector for $M$.

Because $B$ is block diagonal, the (generalized) eigenvectors of B are either $(\mathbf{0}_N^T \ \mathbf{0}_r^T \dots \mathbf{u}_{m,\ell}^T \dots \mathbf{0}_r^T )^T$ or $(\mathbf{w}_i^T \ \mathbf{v}_i^T \ \mathbf{0}_{r}^T \dots \mathbf{0}_r^T )^T$
where ${{\mathbf{u}}_{m,\ell}}$ is the $m^{th}$ component in this block vector and represents the $\ell^{th}$ (generalized) eigenvector of $B_m$ associated with eigenvalue $\lambda_{m,\ell}$ and ${\mathbf{w}_i}\oplus {\mathbf{v}}_i={\mathbf{u}}_{0,i}$, (${{\mathbf{w}}_i}\in\mathbb{C}^N\text{ and }{{\mathbf{v}}_i}\in\mathbb{C}^r$), the $i^{th}$ (generalized) eigenvector of $M_\phi$. Thus the (generalized) eigenvectors of $M$ are represented by $$S\left( {\begin{matrix}
   {{{\mathbf{{0}}}_N}}  \\
   {{{\mathbf{{0}}}_r}}  \\
    \vdots   \\
   {{{\mathbf{{u}}}_{m,l}}}  \\
    \vdots   \\
   {{{\mathbf{0}}_r}}  \\

 \end{matrix} } \right)
  = \left[ {\begin{matrix}
     I_N & 0 & 0 & 0 &  \cdots  & 0  \\
   0 & I & I & I &  \cdots  & I  \\
  0 & I & {\omega I} & {{\omega ^2}I} &  \cdots  & {{\omega ^{k - 1}}I}  \\
  0 & I & {{\omega ^2}I} & {{\omega ^4}I} &  \ddots  & {{\omega ^{2(k - 1)}}I}  \\
  \vdots &  \vdots  &  \vdots  &  \ddots  &  \ddots  &  \vdots   \\
  0 &  I & {{\omega ^{k - 1}}I} & {{\omega ^{2(k - 1)}}I} &  \cdots  & {{\omega ^{{{(k - 1)}^2}}}I}  \\

 \end{matrix} } \right]\left( {\begin{matrix}
   {{{\mathbf{0}}_N}}  \\
   {{{\mathbf{0}}_r}}  \\
    \vdots   \\
   {{{\mathbf{u}}_{m,\ell}}}  \\
    \vdots   \\
   {{{\mathbf{0}}_r}}  \\

 \end{matrix} } \right)= \left( {\begin{matrix}
    {{{\mathbf{0}}_N}}  \\
   {{{\mathbf{u}}_{m,\ell}}}  \\
   {{\omega ^m}{{\bf{u}}_{m,\ell}}}  \\
   {{\omega ^{2m}}{{\bf{u}}_{m,\ell}}}  \\
    \vdots   \\
   {{\omega ^{m(k - 1)}}{{\bf{u}}_{m,\ell}}}  \\

 \end{matrix} } \right) = {{{\bf{0}}_N}}\oplus \mathop  \bigoplus \limits_{j = 0}^{k - 1} {\omega ^{mj}}{{\bf{u}}_{m,l}}$$
 and
 $$S \left( {\begin{matrix}
    {\bf{w}}_{i}\\
   {{{\bf{v}}_{i}}}  \\
   {{{\bf{0}}_r}}  \\
    \vdots   \\
    \vdots   \\
   {{{\bf{0}}_r}}  \\

 \end{matrix} } \right)
  = \left[ {\begin{matrix}
     I_N & 0 & 0 & 0 &  \cdots  & 0  \\
   0 & I & I & I &  \cdots  & I  \\
  0 & I & {\omega I} & {{\omega ^2}I} &  \cdots  & {{\omega ^{k - 1}}I}  \\
  0 & I & {{\omega ^2}I} & {{\omega ^4}I} &  \ddots  & {{\omega ^{2(k - 1)}}I}  \\
  \vdots &  \vdots  &  \vdots  &  \ddots  &  \ddots  &  \vdots   \\
  0 &  I & {{\omega ^{k - 1}}I} & {{\omega ^{2(k - 1)}}I} &  \cdots  & {{\omega ^{{{(k - 1)}^2}}}I}  \\

 \end{matrix} } \right]\left( {\begin{matrix}
    {{{\bf{w}}_{i}}}  \\
    {{{\bf{v}}_{i}}}  \\
    {{{\bf{0}}_r}}  \\
    \vdots   \\
    \vdots   \\
    {{{\bf{0}}_r}}  \\

 \end{matrix} } \right)= \left( {\begin{matrix}
    {\bf{w}}_{i}\\
   {{{\bf{v}}_{i}}}  \\
   {{{\bf{v}}_{i}}}  \\
    \vdots   \\
   {{{\bf{v}}_{i}}}  \\  \\

 \end{matrix} } \right) =   {\bf{w}}_{i}\oplus \mathop  \bigoplus \limits_{j = 0}^{k - 1} {{{\bf{v}}_{i}}}.$$
 Thus we have found $n$ (generalized) eigenvectors of the original matrix $M$.  In order to show this is a complete (generalized) eigenbasis, we need to show that \eqref{eq:vect} is a set of linearly independent vectors.  To do so let $E_0$ be the $(N+r)\times (N+r)$ matrix formed from the eigenbasis vectors of the divisor matrix $M_\phi$ and let $E_i$ for $1\leq i\leq k-1$ be the $r\times r$ matrices formed from the eigenbasis vectors of $B_i$, i.e $E_0=\left[\mathbf{u}_{0,1} \ \mathbf{u}_{0,2} \dots \mathbf{u}_{N+r}\right]\ ,\  E_i=
 \left[
 \mathbf{u}_{i,1} \ \mathbf{u}_{i,2}  \dots  \mathbf{u}_{i,r} \\
 \right]$.
 Let $E$ denote the matrix built from the vectors in Equation \eqref{eq:vect} as the columns. Thus we can write $E$ in the following block form
 $$E=\left[ \begin{matrix}
E_0 & 0  & 0 & 0 &\dots & 0\\
V & E_1  & E_2 &E_3 &\dots & E_{k-1}\\
V & \omega E_1  & \omega^2 E_2 & \omega^3 E_3 &\dots & \omega^{k-1} E_{k-1}\\
V & \omega^2 E_1  & \omega^4 E_2 & \omega^6 E_3 &\dots & \omega^{2(k-1)} E_{k-1}\\
\vdots & \vdots  & \vdots & \vdots & \ddots & \vdots\\
V & \omega^{k-1} E_1  & \omega^{2(k-1)} E_2 & \omega^{3(k-1)} E_3 &\dots & \omega^{(k-1)^2} E_{k-1}\\
 \end{matrix}\right]\s \s ,\s \s$$
where $V=\left[ \mathbf{v}_1 \ \  \mathbf{v}_2 \ \ \dots \  \ \mathbf{v}_{N+r} \right]$.  Showing that the vectors in \eqref{eq:vect} are linearly independent, is equivalent to showing that $\det{E}\neq 0$. Here, we notice that
\begin{align*}
\det{(E)}&=\det{\left( S
\left[\begin{matrix}
E_0 & 0 & 0 & \dots & 0\\
0 & E_1 & 0 & \dots & 0\\
0 & 0 & E_2 & \dots & 0\\
\vdots & \vdots & \vdots & \ddots & 0\\
0 & 0 & 0 & \dots & E_{k-1}\\
\end{matrix}\right]\right)}=\det(S)\det{\left[\begin{matrix}
E_0 & 0 & 0 & \dots & 0\\
0 & E_1 & 0 & \dots & 0\\
0 & 0 & E_2 & \dots & 0\\
\vdots & \vdots & \vdots & \ddots & 0\\
0 & 0 & 0 & \dots & E_{k-1}\\
\end{matrix}\right]}\\
&=\det(S)\prod_{j=0}^{k-1}\det(E_j)
\end{align*}
We have shown previously that the columns of $S$ are orthogonal, thus $\det(S)\neq 0$.  Also we chose the columns of $E_i$, for $0\leq i \leq k-1$, to be generalized eigenbases. Thus we can guarantee that, for every $i$, $\det{E_i}\neq 0$ and thus $\det{E}\neq0$.  This proves that that the set of $n$ vectors we have found actual constitutes a (generalized) eigenbasis for the matrix $M$.

 One can check that the (generalized) eigenvectors correspond to the eigenvalues, as stated in the theorem, by showing that $$(M-I\lambda_{m,l})^{t_{m,l}}\left[{\bf{0}}_N \, \oplus \left[ {\mathop  \bigoplus \limits_{j = 0}^{k - 1} {\omega ^{mj}}{{\bf{u}}_{m,l}}} \right] \right]=\mathbf{0} \ \ \text{for} \ \ t_{m,l}=\text{(rank of } \textbf{x}_{m,l}\text{)}$$
and also that
$$(M-I\lambda_{0,i})^{t_{0,i}}\left[{{\bf{w}}_i}\,\oplus \left[   {\mathop  \bigoplus \limits_{j = 0}^{k - 1} {{\bf{v}}_i}} \right] \right]=\mathbf{0} \ \ \text{for} \ \ t_{0,i}=\text{(rank of } \textbf{x}_{0,i}\text{)}.$$
\end{proof}

\begin{example}\label{ex:eigvec}
As an illustration of Theorem \ref{eigvec} we again consider the graph $G$ shown in Figure \ref{fig:ex1}. In Example \ref{ex:sequence} we found the  decomposition $A(1)=A(0)_{\psi}\oplus B(0)_1\oplus B(0)_2$ of the adjacency matrix $A=A(G)$ over the basic automorphism $\psi=\phi^2=(2,8,5)(3,9,7)(4,10,6)$  (see Equation \eqref{eq:7}). Eigenbases corresponding to $B_{\psi}$, $B_1$, and $B_2$, respectively, are given by the vectors $\mathbf{u}_{0,i}$, $\mathbf{u}_{1,i}$, and $\mathbf{u}_{2,i}$  where
$$B_\psi=\left[ \begin{matrix} 0 & 3 & 0 & 0  \\
   1 & 2 & 1 & 1  \\
   0 & 1 & 0 & 0  \\
   0 & 1 & 0 & 0  \end{matrix}\right]\qquad
   \begin{array}{lll}
   \textbf{u}_{0,1}=(3,\ 1+\sqrt{6},\ 1,\ 1)^{T} \ & \textbf{w}_{1}=(3)  & \textbf{v}_1=(1+\sqrt{6},\ 1,\ 1)^{T}\\
   \textbf{u}_{0,2}=(3,\ 1-\sqrt{6},\ 1,\ 1)^{T} \ & \textbf{w}_{2}=(3) & \textbf{v}_2=(1-\sqrt{6},\ 1,\ 1)^{T}\\
   \textbf{u}_{0,3}=(-1,\ 0,\ 0,\ 1)^{T} \ &\textbf{w}_{3}=(-1) & \textbf{v}_3=(0,\ 0,\ 1)^{T}\\
   \textbf{u}_{0,4}=(-1,\ 0,\ 1,\ 0)^{T} \ &\textbf{w}_{4}=(-1) & \textbf{v}_4=(0,\ 1,\ 0)^{T}\\
   \end{array}
   $$
   $$B_1=B_2=\left[ \begin{matrix} { - 1} & 1 & 1   \\
   1 & 0 & 0  \\
    1 & 0 & 0 \end{matrix}\right] \qquad
   \begin{array}{lll}
   \textbf{u}_{1,1}=\textbf{u}_{2,1}=(-2,\ 1,\  1)^{T}\\
   \textbf{u}_{1,2}=\textbf{u}_{2,2}=(1,\ 1,\  1)^{T}\\
   \textbf{u}_{1,3}=\textbf{u}_{2,3}=(0,\ -1,\  1)^{T}\\
   \end{array}.$$
Note that $\textbf{w}_i$ has only one component since the first basic automorphism only fixed one vertex, i.e. $N=1$. Using the formula in Theorem \ref{eigvec} an eigenbasis of the original matrix $A$ is given by
\begin{align*}
&{{\bf{w}}_1}\oplus {{\bf{v}}_1}\oplus {{\bf{v}}_1}\oplus {{\bf{v}}_1}= (3,\ 1+\sqrt{6},\ 1,\ 1,\ 1+\sqrt{6},\ 1,\ 1,\ 1+\sqrt{6},\ 1,\ 1)^{T}\\
&{{\bf{w}}_2}\oplus {{\bf{v}}_2}\oplus {{\bf{v}}_2}\oplus {{\bf{v}}_2}=(3,\ 1-\sqrt{6},\ 1,\ 1,\ 1-\sqrt{6},\ 1,\ 1,\ 1-\sqrt{6},\ 1,\ 1)^{T}\\
&{{\bf{w}}_3}\oplus {{\bf{v}}_3}\oplus {{\bf{v}}_3}\oplus {{\bf{v}}_3}= (-1,\ 0,\ 0,\ 1,\ 0,\ 0,\ 1,\ 0,\ 0,\ 1)^{T}\\
&{{\bf{w}}_4}\oplus {{\bf{v}}_4}\oplus {{\bf{v}}_4}\oplus {{\bf{v}}_4}= (-1,\ 0,\ 1,\ 0,\ 0,\ 1,\  0,\ 0,\ 1,\  0)^{T}\\
&{{\bf{0}}_N}\oplus\textbf{u}_{1,1}\oplus\omega\textbf{u}_{1,1}\oplus\omega^2\textbf{u}_{1,1}=(0,\ -2,\ 1,\  1,\ -2\omega,\ \omega,\  \omega,\ -2\omega^2,\ \omega^2,\ \omega^2)^{T}\\
&{{\bf{0}}_N}\oplus\textbf{u}_{1,2}\oplus\omega\textbf{u}_{1,2}\oplus\omega^2\textbf{u}_{1,2}=(0,\ 1,\ 1,\ 1,\omega,\ \omega,\  \omega,\ \omega^2,\ \omega^2,\  \omega^2)^{T}\\
&{{\bf{0}}_N}\oplus\textbf{u}_{1,3}\oplus\omega\textbf{u}_{1,3}\oplus\omega^2\textbf{u}_{1,3}=(0,\ 0,\ -1,\  1,\ 0,\ -\omega,\  \omega,\ 0,\ -\omega^2,\  \omega^2)^{T}\\
&{{\bf{0}}_N}\oplus\textbf{u}_{2,1}\oplus\omega^2\textbf{u}_{2,1}\oplus\omega^4\textbf{u}_{2,1}=(0,\ -2,\ 1,\  1,\ -2\omega^2,\ \omega^2,\  \omega^2,\ -2\omega,\ \omega,\  \omega)^{T}\\
&{{\bf{0}}_N}\oplus\textbf{u}_{2,2}\oplus\omega^2\textbf{u}_{2,2}\oplus\omega^4\textbf{u}_{2,2}=(0,\ 1,\ 1,\  1,\omega^2,\ \omega^2,\  \omega^2,\ \omega,\ \omega,\  \omega)^{T}\\
&{{\bf{0}}_N}\oplus\textbf{u}_{2,3}\oplus\omega^2\textbf{u}_{2,3}\oplus\omega^4\textbf{u}_{2,3}=(0,\ 0,\ -1,\  1,\ 0,\ -\omega^2,\  \omega^2,\ 0,\ -\omega,\  \omega)^{T}\\
\end{align*}
where $\omega=e^{2\pi i/3}$.
\end{example}

The process  carried out in Example \ref{ex:eigvec} of constructing eigenvectors of a matrix from the eigenvectors of its decomposition {over a basic automorphism} can also be done for separable automorphisms. This is done by finding the eigenvectors of the matrices in the final decomposition and working backwards, as in this example, until the eigenvectors of the original matrix have been fully reconstructed. For instance, we could find the eigenvectors corresponding to the full decomposition of the matrix $A$ over the automorphism $\phi$ in Example \ref{ex:sequence}. From there we could work back to an eigenbasis of $A$.

If $\phi$ is a uniform automorphism of a graph $G$ then the eigenvectors of an automorphism compatible matrix $M$ can also be decomposed as is shown in \eqref{eq:vect} for $N=0$. Specifically, if $M_{\phi}\oplus B_1 \oplus B_2 \oplus \cdots \oplus B_{k-1}$ is an equitable decomposition of $M$ with respect to $\phi$ in which $\mathbf{u}_{m,\ell}$ is the $\ell^{th}$ eigenvector of $B_m$ then the eigenvectors of $M$ are given by the set
$$\left\{\ {{\mathop  \bigoplus \limits_{j = 0}^{k - 1} {\omega ^{m j}}{{\bf{u}}_{m,\ell}}}:1 \leq m \leq k - 1,1 \leq \ell \leq r, \omega=e^{2\pi i/k} \,} \right\}.$$

It is worth noting that both the eigenvalues and eigenvectors of $M$ are \emph{global} characteristics of the matrix $M$ in the sense that they depend, in general, on all entries of the matrix or equivalently on the entire structure of the graph $G$. In contrast, many symmetries of a graph $G$ are inherently \emph{local}, specifically when they correspond to an automorphism that fixes some subset of the vertex set of $G$, e.g. a basic automorphism.

This difference is particularly important in the case where we are interested in deducing spectral properties associated with the graph structure of a network. Reasons for this include the fact that most real networks are quite large, often having either thousands, hundreds of thousands, or more vertices. Second, real networks are on average much more structured and in particular have more symmetries than random graphs (see \cite{MacArthur}). Third, there is often only partial or local information regarding the structure of many of these networks because of the complications in obtaining network data (see, for instance, \cite{Clauset08}).

The implication, with respect to equitable decompositions, is that by finding a local graph symmetry it is possible to gain information regarding the graph's set of eigenvalues and eigenvectors, which is information typically obtained by analyzing the entire structure of the network. This information, although incomplete, can be used to determine key spectral properties of the network.

One of the most important and useful of these characteristics is the spectral radius associated with the graph structure $G$ of a network. The \emph{spectral radius} of a matrix $M$ associated with $G$ is given by
\[\rho(M)=\max\{|\lambda|:\lambda\in\sigma(M)\}.\]

The spectral radius $\rho(M)$ of a network, {or, more generally, a dynamical system}, is particularly important for studying the system's dynamics. For instance, the matrix $M$ associated with a network may be a global or local linearization of the system of equations that govern the network's dynamics. If the network's dynamics are modeled by a discrete-time system, then stability of the system is guaranteed if $\rho(M)<1$ and local instability results when $\rho(M)>1$ \cite{BW10,BW11,HA03}.

Using the theory of equitable decompositions, it is possible to show not only that $\sigma(M_{\phi})\subset\sigma(M)$, but also that the spectral radius $\rho(M)$ of $M$ is an eigenvalue of $M_{\phi}$ if $M$ is both nonnegative and irreducible.

\begin{prop}\label{lem:3}\textbf{(Spectral Radius of Equitable Partitions)}
Let $\phi$ be a basic or separable automorphism of a graph $G$ with $M$ an automorphism compatible matrix. If $M$ is nonnegative, then $\rho(M)=\rho(M_{\phi})$. If $M$ is both irreducible and nonnegative then the spectral radius, $\rho(M)$, is an eigenvalue of $M_{\phi}$.
\end{prop}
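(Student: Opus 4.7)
The plan is to peel off the easy inequality using Corollary \ref{cor:4}, treat the irreducible case via a Perron--Frobenius symmetry argument, and then obtain the general nonnegative case by a perturbation. From Corollary \ref{cor:4} we have $\sigma(M_\phi) \subseteq \sigma(M)$, hence $\rho(M_\phi) \leq \rho(M)$ without any hypothesis on $M$; the entire content of the proposition lies in the reverse inequality.

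Assume first that $M$ is nonnegative and irreducible, which already covers part (2). By Perron--Frobenius, $\rho(M)$ is a simple eigenvalue of $M$ with a strictly positive eigenvector $v$. Let $P_\phi$ denote the permutation matrix of $\phi$. Automorphism compatibility is equivalent to the commutation $P_\phi M = M P_\phi$, so $M(P_\phi v) = \rho(M)(P_\phi v)$. Simplicity of $\rho(M)$ forces $P_\phi v = c\,v$ for some scalar $c$; positivity of $v$ and $P_\phi v$ gives $c > 0$, and iterating $|\phi|$ times yields $c^{|\phi|} = 1$, so $c = 1$. Thus $v$ is constant on each $\phi$-orbit, and we may write $v = P\tilde v$ where $P$ is the $n \times k$ characteristic matrix of the $\phi$-orbit partition and $\tilde v > 0$. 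The orbit partition is equitable with divisor matrix $M_\phi$ (by Proposition 3.2 and Theorem 4.4 of \cite{BFW} in the basic case, and by iterating the final identity of Proposition \ref{prop:dd} through Theorem \ref{thm:3} in the separable case), so the divisor intertwining $MP = PM_\phi$ gives $PM_\phi \tilde v = MP\tilde v = \rho(M)\, P\tilde v$. Since $P$ has full column rank, $M_\phi \tilde v = \rho(M)\,\tilde v$, which simultaneously shows that $\rho(M)$ is an eigenvalue of $M_\phi$ (giving part (2)) and that $\rho(M) \leq \rho(M_\phi)$.

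For part (1) in the general nonnegative case, I would approximate $M$ by $M_\epsilon = M + \epsilon J$ for $\epsilon > 0$, where $J$ is the all-ones matrix. Since $J$ is trivially automorphism compatible on every graph and automorphism compatibility is preserved under sums, $M_\epsilon$ is automorphism compatible, and being strictly positive it is irreducible. The divisor construction of Definition \ref{def:EP} is linear in the input matrix, so $(M_\epsilon)_\phi = M_\phi + \epsilon J_\phi$. Applying the irreducible case gives $\rho(M_\epsilon) = \rho((M_\epsilon)_\phi)$ for every $\epsilon > 0$, and letting $\epsilon \to 0^+$ together with the continuity of the spectral radius as a function of matrix entries yields $\rho(M) = \rho(M_\phi)$.

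The main obstacle is the symmetry step that produces a Perron eigenvector constant on $\phi$-orbits: the commutation $P_\phi M = M P_\phi$ by itself only makes the $\rho(M)$-eigenspace $P_\phi$-invariant, and it is precisely the combination of simplicity of $\rho(M)$ and positivity of $v$ that upgrades this to pointwise invariance under $P_\phi$. Once that is in hand, the divisor-matrix intertwining disposes of the irreducible case and the perturbation $M \leadsto M + \epsilon J$ takes care of the rest.
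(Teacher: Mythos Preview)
Your argument is correct and takes a genuinely different route from the paper's. The paper works entirely inside the block decomposition $M_\phi\oplus B_1\oplus\cdots\oplus B_{k-1}$: it shows $|B_j|\le B_0$ entrywise (hence $\rho(B_j)\le\rho(B_0)$ by Theorem~8.1.18 of \cite{Horn85}) and that $B_0$ is a principal submatrix of $M_\phi$ (hence $\rho(B_0)\le\rho(M_\phi)$), which together with $\sigma(M)=\sigma(M_\phi)\cup\bigcup_j\sigma(B_j)$ gives $\rho(M)=\rho(M_\phi)$ directly for all nonnegative $M$; for the second claim it argues graph-theoretically that $M_\phi$ inherits irreducibility and then applies Perron--Frobenius to $M_\phi$. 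By contrast you never touch the $B_j$: you apply Perron--Frobenius to $M$, use the commutation $P_\phi M=MP_\phi$ to force the Perron vector to be constant on orbits, and push it down through the intertwining $MP=PM_\phi$. Your approach is more conceptual and in fact does not use the basic/separable hypothesis at all (the orbit partition of \emph{any} automorphism is equitable with divisor matrix $M_\phi$), so it proves a slightly stronger statement. The trade-off is that the paper handles the merely-nonnegative case in one stroke via the submatrix/domination bounds, whereas you need the auxiliary perturbation $M\mapsto M+\epsilon J$ and a continuity argument; conversely, the paper needs a separate irreducibility-preservation lemma for $M_\phi$ that your argument bypasses entirely.
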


\begin{proof}
We begin by proving the result for basic (and uniform) automorphisms and then extending the result to separable automorphisms. Assume $M$ is nonnegative and with basic automorphism $\phi$. To prove that $\rho(M)=\rho(M_{\phi})$, we first claim that $\rho(M_{\phi})\geq\rho(B_j)$ for $1\leq j\leq k-1$ where $M_{\phi}\oplus B_1\oplus\cdots\oplus B_{k-1}$ is an equitable decomposition of $M$ with respect to $\phi$.

To verify this claim, we first need Corollary 8.1.20 in \cite{Horn85} which states that if $N$ is a principal submatrix of $M$ then $\rho(N)\leq\rho(M)$ if $M$ is nonnegative. Recall that $M_\phi=\left[ \begin{matrix}
F & kH\\
L & B_0\\
\end{matrix} \right]$ if $\phi$ is a basic automorphism that fixes some positive number of vertices.
Thus, for a basic automorphism, $B_0$ is a principal submatrix of $M_\phi$. Since $M$ is nonnegative,  Equation \eqref{eq:B} shows that $M_{\phi}$ is nonnegative, and  we can conclude that $\rho(B_0)\leq\rho(M_\phi)$.  In the case that $\phi$ is a uniform automorphism, $M_{\phi}=B_0 $.
 
Next, for a matrix $P\in\mathbb{C}^{n\times n}$, let $|P|\in\mathbb{R}_{\geq 0}^{n\times n}$ denote the matrix with entries $|P|_{ij}=|P_{ij}|$, i.e. $|P|$ is the entrywise absolute value of $P$. Moreover, if $P,Q\in\mathbb{R}^{n\times n}$ let $P\leq Q$ if $P_{ij}\leq Q_{ij}$ for all $1\leq i,j\leq n$. Theorem 8.1.18 in \cite{Horn85}, states that if $|P|\leq Q$  then $\rho(P)\leq\rho(Q)$. Because
$$\left| {{B_j}} \right| = \left| {\sum\limits_{m = 0}^{k - 1} {{{\left( {{\omega ^j}} \right)}^m}{M_m}} } \right| \leq \sum\limits_{m = 0}^{k - 1} {\left| {{{\left( {{\omega ^j}} \right)}^m}{M_m}} \right|}  = \sum\limits_{m = 0}^{k - 1} {{M_m}}  = {B_0}$$
we can conclude that $\rho(B_j)\leq\rho(B_0)$ for all $1\leq j\leq k-1$. Therefore,
\begin{equation}\label{eq:rho}
\rho(B_j)\leq\rho(B_0)\leq\rho(M_{\phi}) \ \text{for all} \ 1\leq j\leq k-1.
\end{equation} 
which verifies our claim.  Using this claim and the fact that $\sigma(M)=\sigma(M_\phi)\cup\sigma(B_1)\cup\dots\cup\sigma(B_{k-1})$ we can immediately conclude that $\rho(M)=\rho(M_\phi)$.

Now we assume that $M$ is both nonnegative and irreducible. The Perron-Frobenius Theorem implies that $r=\rho(M)$ is a simple eigenvalue of $M$. 

Next we claim that if $M$ is irreducible, then $M_\phi$ is also irreducible. Let $M$ be an $n\times n$ nonnegative matrix with an associated basic automorphism $\phi$ with order $k$, and let $M_\ell$ be the submatrix of $M$ associated with the $\ell^{th}$ power of the semitransversal in an equitable decomposition of $M$ over $\phi$. Recall that a matrix is reducible if and only if its associated weighted digraph is \emph{strongly connected}, meaning for any two vertices in the graph there is a directed path between them.  Suppose $G$ is the strongly connected graph with weighted adjacency matrix $M$.  Also we suppose that $G_\phi$ is the graph whose weighted adjacency matrix is $M_\phi$ and let $a$ and $b$ be vertices of $G_\phi$. Note that every vertex fixed by $\phi$ in $G$ directly corresponds to a vertex in $G_\phi$, and all other vertices correspond to a collection of $k$ vertices in $G$, (cf. Section \ref{sec:6}).  Choose $a'$ and $b'$ in $G$ to be any vertices corresponding to $a$ and $b$, respectively. Now because $G$ is strongly connected it contains a path $a'=v_0', v_1', v_2',\dots, b'=v_m'$ from $a'$ to $b'$.  Consider the sequence of vertices $a=v_0, v_1, v_2 ,\dots, b=v_m$ where $v_i$ is the unique vertex in $G_\phi$ corresponding $v_i'$ in $G$. To prove this is a path we must show each of the entries in matrix $M_\phi$ corresponding to the edges $v_i\rightarrow v_{i+1}$ are positive. If $v_i'$ or $v_{i+1}'$ are fixed by $\phi$ then the entry in $M$ corresponding to the $v_i'\rightarrow v_{i+1}'$ edge is either equal to the entry in $M_\phi$ corresponding to the $v_i\rightarrow v_{i+1}$ edge, or is a positive multiple thereof. If $v_i'$ and $v_{i+1}'$ are not fixed by $\phi$, then suppose the $v_i'\rightarrow v_{i+1}'$ edge corresponds to $M_\ell(r,s)$ for some $\ell$ and for some indices $r$ and $s$. By hypothesis, $M_\ell(r,s)>0$. The $v_i\rightarrow v_{i+1}$ edge corresponds to $B_0(r,s)$, and  $B_0=\sum_\ell{M_\ell}$, {where each $M_\ell(r,s)$ is nonnegative}. Therefore this entry must also be positive in $M_\phi$.  Thus we can conclude that $G_\phi$ is strongly connected and therefore $M_\phi$ is irreducible.


Because $M_\phi$ is irreducible, we can apply the Perron-Frobenius Theorem to $M_\phi$. This implies that $\rho(M_\phi)$ is an eigenvalue of $M_\phi$, but from the first part of this theorem, we already showed that $\rho(M)=\rho(M_\phi)$, thus we conclude that $\rho(M)$ must be an eigenvalue of $M_\phi$.

This completes the proof if $\phi$ is a basic automorphism. If $\phi$ is separable then Proposition \ref{thm:3} guarantees that there are basic automorphisms $\psi_0,\dots,\psi_{h}$ that induce a sequence of equitable decompositions on $M$ such that $M_{\phi}=(\dots(M_{\psi_0})_{\psi_1}\dots)_{\psi_{h}}$. By induction each subsequent decomposition results in a nonnegative divisor matrix $(\dots(M_{\psi_0})_{\psi_1}\dots)_{\psi_i}$ for $i\leq h$ with the same spectral radius $r=\rho(M)$ implying that $\rho(M_{\phi})=\rho(M)$ for any $\phi\in Aut(G)$.
\end{proof}

It is worth noting that many matrices typically associated with real networks are both nonnegative and irreducible. This includes the adjacency matrix as well as other weighted matrices \cite{Newman10}; although, there are some notable exceptions, including Laplacian matrices. Moreover, when analyzing the stability of a network, a linearization $M$ of the network's dynamics inherits the symmetries of the network's structure. Hence, if a symmetry of the network's structure is known then this symmetry can be used to decompose $M$ into a smaller divisor matrix $M_{\phi}$. As $M$ and $M_{\phi}$ have the same spectral radius, under the conditions stated in Proposition \ref{lem:3}, then one can use the smaller matrix $M_{\phi}$ to either calculate or estimate the spectral radius of the original unreduced network as is demonstrated in the following example.

\begin{example}\label{ex:sr}
We again consider the graph $G$ from Example \ref{ex:sequence} with $\phi=(2,5,8)(3,4,6,7,9,10)$. {Here the graph's adjacency matrix $A=A(G)$ is both irreducible and nonnegative. The set of eigenvalues of the matrix $A$} and the divisor matrix $A_{\phi}$ are
\[
\sigma(A)=\{ 1 + \sqrt{6},\ 1 -\sqrt{6},\ -2,\ -2,\ 1,\ 1,\ 0,\ 0,\ 0,\ 0\} \ \ \text{and} \ \ \sigma(A_{\phi})=\{ 1 + \sqrt{6},\ 1 -\sqrt{6},\ 0,\ 0\}.
\]
Hence, $\rho(A)=\rho(A_{\phi})$ and $\rho(A_\phi)$ is actually an eigenvalue of $A$ as guaranteed by Proposition \ref{lem:3}.
\end{example}

\section{Equitable Decompositions and Improved Eigenvalues Estimates}\label{sec:5}
Beginning in the mid-19th century a number of methods were developed to approximate the eigenvalues of general complex valued matrices. These included the results of Gershgorin \cite{Gershgorin31}, Brauer \cite{Brauer47}, Brualdi \cite{Brualdi82}, and Varga \cite{Varga99}. The main idea in each of these methods is that for a matrix $M\in\mathbb{C}^{n\times n}$ it is possible to construct a bounded region in the complex plane that contains the matrix' eigenvalues. This region serves as an approximation for the eigenvalues of $M$.

In this section we investigate how equitable decompositions affect, in particular, the approximation method of Gershgorin. Our main result is that the Gershgorin region associated with an equitable decomposition of a matrix $M$ is contained in the Gershgorin region associated with $M$. That is, by equitably decomposing a matrix over some automorphism it is possible to gain improved eigenvalue estimates by use of Gershgorin's theorem.

To describe this result we first give the following classical result of Gershgorin.

\begin{thm}{\textbf{(Gershgorin's Theorem)} \cite{Gershgorin31}} Let $M\in \mathbb{C}^{n\times n}$. Then all eigenvalues of $M$ are contained in the set
$$\Gamma (M)= \bigcup_{i=1}^n \left\{ \lambda \in \mathbb{C} :| \lambda-M_{ii}|\leq \sum_{j=1,j\ne i}^n | M_{ij} | \right\}$$
\end{thm}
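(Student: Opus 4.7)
The plan is to establish Gershgorin's Theorem by the classical ``maximum component'' argument. Fix any eigenvalue $\lambda \in \sigma(M)$ together with a corresponding eigenvector $x = (x_1, \ldots, x_n)^T \neq \mathbf{0}$. The strategy is to locate a single index $i$ (depending on $x$) such that $\lambda$ lies in the $i$-th Gershgorin disk, and then conclude that $\lambda$ belongs to the union $\Gamma(M)$.

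First I would choose $i \in \{1,\ldots,n\}$ so that $|x_i| = \max_{1 \leq j \leq n} |x_j|$. Since $x \neq \mathbf{0}$, this maximum is strictly positive, so $|x_i| > 0$. Next I would read off the $i$-th scalar equation from the eigenvalue relation $Mx = \lambda x$, namely
\[
\sum_{j=1}^n M_{ij} x_j = \lambda x_i,
\]
and isolate the diagonal term to obtain $(\lambda - M_{ii}) x_i = \sum_{j \neq i} M_{ij} x_j$.

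Taking absolute values, applying the triangle inequality, and then using the maximality of $|x_i|$ gives
\[
|\lambda - M_{ii}|\,|x_i| \;\leq\; \sum_{j \neq i} |M_{ij}|\,|x_j| \;\leq\; \Bigl(\sum_{j \neq i} |M_{ij}|\Bigr)|x_i|.
\]
Dividing through by $|x_i| > 0$ yields $|\lambda - M_{ii}| \leq \sum_{j \neq i} |M_{ij}|$, which places $\lambda$ in the $i$-th Gershgorin disk and hence in $\Gamma(M)$. Because $\lambda$ was an arbitrary eigenvalue, this proves $\sigma(M) \subseteq \Gamma(M)$.

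There is no real obstacle here; the only subtlety is making sure one picks the index $i$ \emph{after} fixing the eigenvector so that the inequality $|x_j| \leq |x_i|$ is available to absorb the off-diagonal terms uniformly. The positivity of $|x_i|$, which is what licenses the final division, follows immediately from $x \neq \mathbf{0}$, so the argument is self-contained and short.
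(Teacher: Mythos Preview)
Your proof is correct and is exactly the classical maximum-component argument for Gershgorin's Theorem. Note, however, that the paper does not supply its own proof of this statement: it is quoted as a classical result with a citation to \cite{Gershgorin31}, so there is no paper proof to compare against.
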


Geometrically this theorem states that all eigenvalues of a given matrix $M\in\mathbb{C}^{n\times n}$ must lie in the union of $n$ disks in the complex plane, where the $i^{th}$ disk is constructed from the $i^{th}$ row of $M$. Specifically, the $i^{th}$ disk is centered at $M_{ii}\in\mathbb{C}$ and has the radius $\sum_{j=1,j\ne i}^n |M_{ij}|$. The union of these disks forms the \emph{Gershgorin region} $\Gamma(M)$ of the matrix $M$. The following theorem describes the effect that an equitable decomposition has on a matrix' Gershgorin region.

\begin{thm}\label{thm:Gersh}
Let $\phi$ be a basic or separable automorphism of a graph $G$ with M an automorphism compatible matrix. If $B=M_{\phi}\oplus B_1\oplus \cdots \oplus B_{k}$ is an equitable decomposition of $M$ with respect to $\phi$ then $\Gamma(B)\subseteq\Gamma(M)$.
\end{thm}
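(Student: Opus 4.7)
The plan is to first reduce to the case of a basic automorphism and then analyze row by row. If $\phi$ is separable, then by Theorem~\ref{thm:3} the equitable decomposition $B$ is obtained by a finite sequence of basic equitable decompositions $M = M(0) \to M(1) \to \cdots \to M(h+1) = B$ (where each $M(i+1)$ is a basic decomposition of $M(i)$ with respect to $\psi_i$). So once we establish the containment $\Gamma(N_{\psi}) \cup \bigcup_j \Gamma(B_j) \subseteq \Gamma(N)$ for a basic equitable decomposition of $N$ over $\psi$, an induction on $i$ immediately yields $\Gamma(B) = \Gamma(M(h+1)) \subseteq \cdots \subseteq \Gamma(M(0)) = \Gamma(M)$.

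For the basic case, write $M$ in the block form of \eqref{eq:circulant} from Theorem~\ref{eigvec}, so that $M_\phi = \left[\begin{smallmatrix} F & kH \\ L & B_0\end{smallmatrix}\right]$ with $B_0 = \sum_m M_m$ and $B_j = \sum_{m=0}^{k-1} \omega^{jm} M_m$ for $j \geq 1$. Since $B = M_\phi \oplus B_1 \oplus \cdots \oplus B_{k-1}$ is block-diagonal, each Gershgorin disk of $B$ is determined by a row of one of these blocks. I will show each such disk is contained in the Gershgorin disk of $M$ corresponding to an appropriate vertex. There are three row types to consider: (a) a row from the $F$-portion of $M_\phi$ (indexed by a fixed vertex $u \in \mathcal{T}_f$); (b) a row from the $[L\mid B_0]$-portion of $M_\phi$ (indexed by a semi-transversal vertex $v_i \in \mathcal{T}_0$); (c) a row of some $B_j$, $j \geq 1$ (also indexed by $v_i \in \mathcal{T}_0$).

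Case (a) is immediate: by automorphism compatibility, the row of $M$ at $u$ has the same entries $F_{uv}$ on $\mathcal{T}_f$ and $k$ identical copies of $H_{uw}$ across $\mathcal{T}_0,\dots,\mathcal{T}_{k-1}$, and the corresponding row of $M_\phi$ has $F$-entries together with $kH_{uw}$; both diagonal entries equal $F_{uu}$ and both radii equal $\sum_{v\ne u}|F_{uv}| + k\sum_w |H_{uw}|$, so the disks agree. For cases (b) and (c), the strategy is to show that (distance between centers) $+$ (radius of the $B$-disk) $\leq$ (radius of the $M$-disk for $v_i$). The center of the $M$-disk is $(M_0)_{ii}$, while the $B$-centers are $\sum_m (M_m)_{ii}$ in case (b) and $\sum_m \omega^{jm}(M_m)_{ii}$ in case (c); in both cases the triangle inequality bounds the center-shift by $\sum_{m=1}^{k-1}|(M_m)_{ii}|$, using $|\omega^{jm}|=1$. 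The new radius is bounded, again by the triangle inequality together with $|\omega^{jm}|=1$, by $\sum_{l\ne i}|(M_0)_{il}| + \sum_{m=1}^{k-1}\sum_{l\ne i}|(M_m)_{il}|$ in case (c), and by this plus $\sum_{u\in\mathcal{T}_f}|L_{iu}|$ in case (b). Summing center-shift and radius collapses the sum over $l\ne i$ together with the diagonal entries $|(M_m)_{ii}|$ for $m\geq 1$ into $\sum_{m=1}^{k-1}\sum_l |(M_m)_{il}|$, which together with the $L$- and $M_0$-contributions is exactly the Gershgorin radius of $M$ at the row $v_i$. The main obstacle is bookkeeping: one must carefully track which entries of $M_m$ at position $(i,i)$ correspond to \emph{off-diagonal} entries of $M$ (since $\phi^m(v_i) \ne v_i$ for $m\ge 1$), so these terms appear in the $M$-radius and compensate exactly for the center shift. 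Once this is done, containment follows from two applications of the triangle inequality, completing the proof.
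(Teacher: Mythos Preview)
Your proposal is correct and follows essentially the same approach as the paper: reduce the separable case to a chain of basic decompositions by induction, then for a basic automorphism use the block-circulant form \eqref{eq:circulant} and check row by row that (center shift) $+$ (new radius) $\le$ (old radius) via the triangle inequality and $|\omega^{jm}|=1$. The paper organizes the argument slightly differently (treating the uniform case first, then layering in the fixed-vertex rows and the $L$-column), but your three-case split (a)/(b)/(c) is the same computation and your key observation---that the entries $(M_m)_{ii}$ for $m\ge1$ are off-diagonal in $M$ and hence contribute to the $M$-radius, exactly compensating the center shift---is precisely the crux of the paper's chain of inequalities.
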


\begin{proof}
First, suppose for simplicity that $\phi$ is a uniform automorphism of $G$. The $i^{th}$ row of the matrix $M$ defines a disk in the complex plane centered at $M_{ii}$ with radius equal to $\sum_{j\ne i}\left| M_{ij}\right| $.  So we want every disk generated by each $B_t$ matrix to be contained in some disk generated by $M$. We can achieve this if the distance between disk centers is less than the difference in the two disk's radii. Thus we need to show that for every $i$ and $t$ that $\left| {{M_{qq}} - {{\left[ {{B_t}} \right]}_{ii}}} \right| \leq \left| \sum\limits_{j \ne q} {\left| {{M_{qj}}} \right|}  - \sum\limits_{j \ne i} {\left| {{{\left[ {{B_t}} \right]}_{ij}}} \right|} \right| $ for some $q$. Let $q=i$. Using Equation \eqref{eq:B} and rearranging terms we see that
\begin{align}
 \left| \sum\limits_{j \ne q} {\left| {{M_{qj}}} \right|}  - \sum\limits_{j \ne i} {\left| {{{\left[ {{B_t}} \right]}_{ij}}} \right|} \right| \geq &
  \sum\limits_{l \ne i}^n {\left| {{M_{il}}} \right|}  - \sum\limits_{j \ne i}^r {\left| {{{\left[ {{B_t}} \right]}_{ij}}} \right|}\cr
  =& \left[ {\sum\limits_{m = 0}^{k - 1} {\sum\limits_{j = 1}^r {\left| {{{\left[ {{M_m}} \right]}_{ij}}} \right|} }  - \sum\limits_{j \ne i}^r {\left| {\sum\limits_{m = 0}^{k - 1} {{{\left( {{\omega ^t}} \right)}^m}{{\left[ {{M_m}} \right]}_{ij}}} } \right|} } \right] - \left| {{{\left[ {{M_0}} \right]}_{ii}}} \right| \cr
   =& \left[ {\sum\limits_{m = 0}^{k - 1} {\sum\limits_{j = 1}^r {\left| {{{\left( {{\omega ^t}} \right)}^m}{{\left[ {{M_m}} \right]}_{ij}}} \right|} }  - \sum\limits_{j \ne i}^r {\left| {\sum\limits_{m = 0}^{k - 1} {{{\left( {{\omega ^t}} \right)}^m}{{\left[ {{M_m}} \right]}_{ij}}} } \right|} } \right] - \left| {{{\left[ {{M_0}} \right]}_{ii}}} \right| \cr
   \geq & \left[ {\sum\limits_{m = 0}^{k - 1} {\sum\limits_{j = 1}^r {\left| {{{\left( {{\omega ^t}} \right)}^m}{{\left[ {{M_m}} \right]}_{ij}}} \right|} }  - \sum\limits_{j \ne i}^r {\sum\limits_{m = 0}^{k - 1} {\left| {{{\left( {{\omega ^t}} \right)}^m}{{\left[ {{M_m}} \right]}_{ij}}} \right|} } } \right] - \left| {{{\left[ {{M_0}} \right]}_{ii}}} \right| \cr
   =& \sum\limits_{m = 0}^{k - 1} {\left[ {\sum\limits_{j = 1}^r {\left| {{{\left( {{\omega ^t}} \right)}^m}{{\left[ {{M_m}} \right]}_{ij}}} \right|}  - \sum\limits_{j \ne i}^r {\left| {{{\left( {{\omega ^t}} \right)}^m}{{\left[ {{M_m}} \right]}_{ij}}} \right|} } \right]}  - \left| {{{\left[ {{M_0}} \right]}_{ii}}} \right| \cr
   =& \sum\limits_{m = 0}^{k - 1} {\left[ {\left| {{{\left( {{\omega ^t}} \right)}^m}{{\left[ {{M_m}} \right]}_{ii}}} \right|} \right]}  - \left| {{{\left[ {{M_0}} \right]}_{ii}}} \right| \cr
   =& \sum\limits_{m = 1}^{k - 1} {\left| {{{\left( {{\omega ^t}} \right)}^m}{{\left[ {{M_m}} \right]}_{ii}}} \right|}  \cr
   \geq & \left| {\sum\limits_{m = 1}^{k - 1} {{{\left( {{\omega ^t}} \right)}^m}{{\left[ {{M_m}} \right]}_{ii}}} } \right| \cr
   =& \left| {{{\left[ {{B_t}} \right]}_{ii}} - {M_{ii}}} \right| \cr
   =& \left| {{M_{ii}} - {{\left[ {{B_t}} \right]}_{ii}}} \right| \cr
\end{align}
where the $M_m$ matrices are the block matrices found in the block circulant form of $M$. Therefore, every disk generated by $B_m$ for each $m$ is contained in some disk generated by $M$ and we conclude that $\Gamma(B)\subset \Gamma(M)$ for a uniform automorphism.\\

Next we assume our graph has a basic automorphism $\phi$ that fixes a positive number of vertices of $G$. Then the matrix $M$ decomposes in the following way
\begin{equation*}
 \left[\begin{array}{llllll}
F & H & H & H & \cdots & H \\
L & M_0 & M_1 & M_2 & \cdots & M_{k-1} \\
L & M_{k-1} & M_0 & M_1 & \cdots & M_{k-2} \\
L & M_{k-2} & M_{k-1} & M_0 & \cdots & M_{k-3} \\
\vdots & \vdots & \vdots & \vdots & & \vdots \\
L & M_1 & M_2 & M_3 & \cdots & M_0 \\
\end{array}\right]\rightarrow
\left[\begin{array}{llllll}
F & kH & 0 & 0 & \cdots & 0 \\
L & B_0 & 0 & 0 & \cdots & 0 \\
0 & 0 & B_1 & 0 & \cdots & 0 \\
0 & 0 & 0 & B_2 & \cdots & 0 \\
\vdots & \vdots & \vdots & \vdots & & \vdots \\
0 & 0 & 0 & 0 & \cdots & B_{k-1} \\
\end{array}\right]
\end{equation*}
Clearly the first $N$ rows, which correspond to vertices fixed by the automorphism, have exactly the same Gershgorin region in both matrices. {For the next block row, the previous argument implies that the Gershgorin disks associated with these rows are contained in the Gershgorin disks of the corresponding rows in $M$ if we disregard the first block column. Including the first block column in the Gershgorin region calculation increases the radii of the corresponding Gershgorin disks for both matrices by the same amount.} Thus we still get containment. For all remaining rows the same argument applies except we only increase the radii of the disks from the original matrix when considering the first block column. Thus, for the basic automorphism $\phi$ we have $\Gamma(B) \subset \Gamma( M )$.

We showed in Proposition \ref{thm:3} that decomposing a matrix over a separable automorphism can be done as a sequence of decompositions of basic automorphisms. After each decomposition of a basic automorphism we get containment. Thus, using an inductive argument we can extend this theorem to all separable automorphisms.
\end{proof}

\begin{figure}
\begin{center}
\begin{tabular}{cc}
    \begin{overpic}[scale=.4]{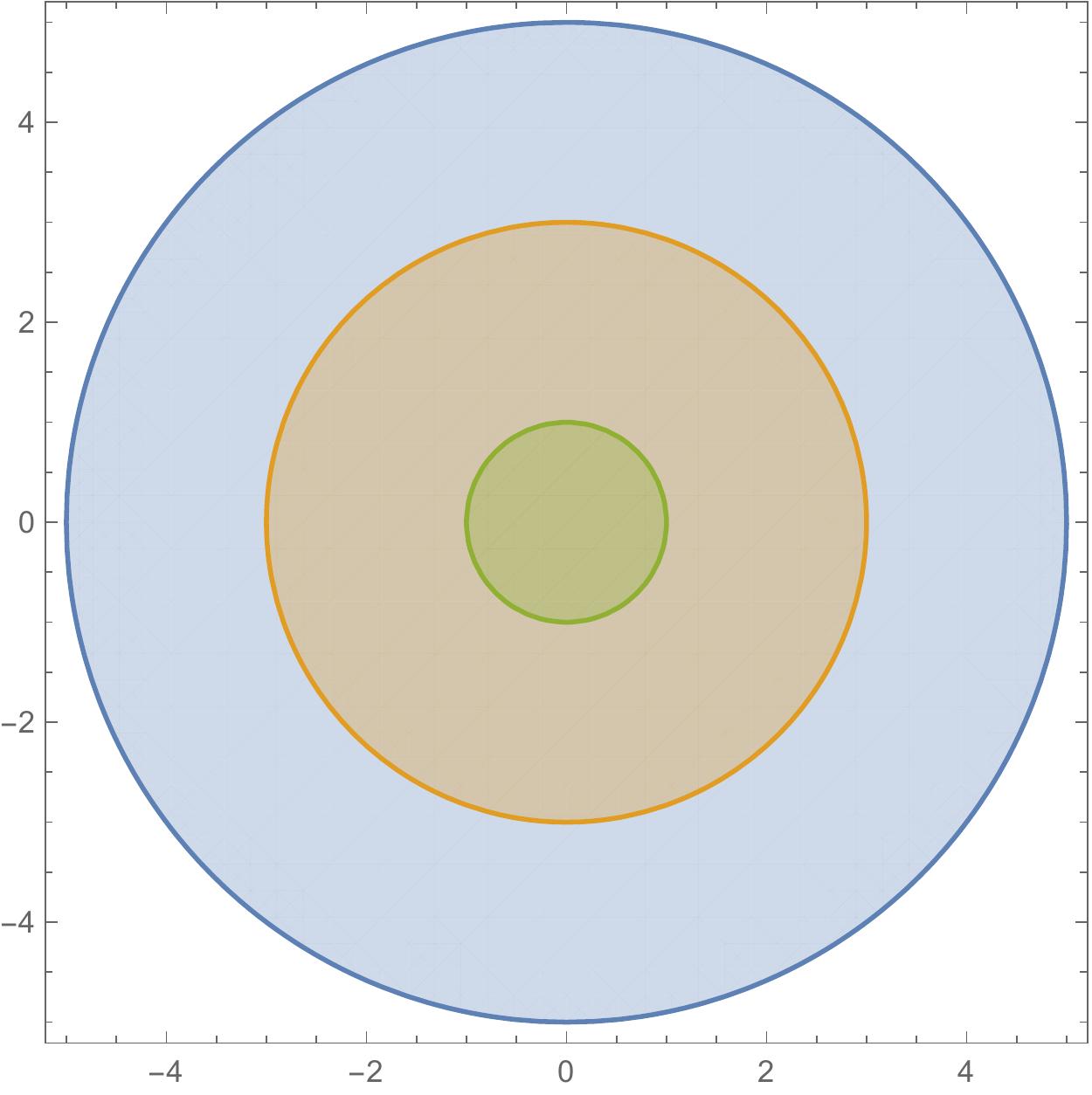}
    \put(46,-10){$\Gamma(A)$}
    \put(32,50.25){$\bullet$}
    \put(36,50.25){$\bullet$}
    \put(50,50.25){$\bullet$}
    \put(59.5,50.25){$\bullet$}
    \put(82,50.25){$\bullet$}
    \end{overpic} &
    \begin{overpic}[scale=.4]{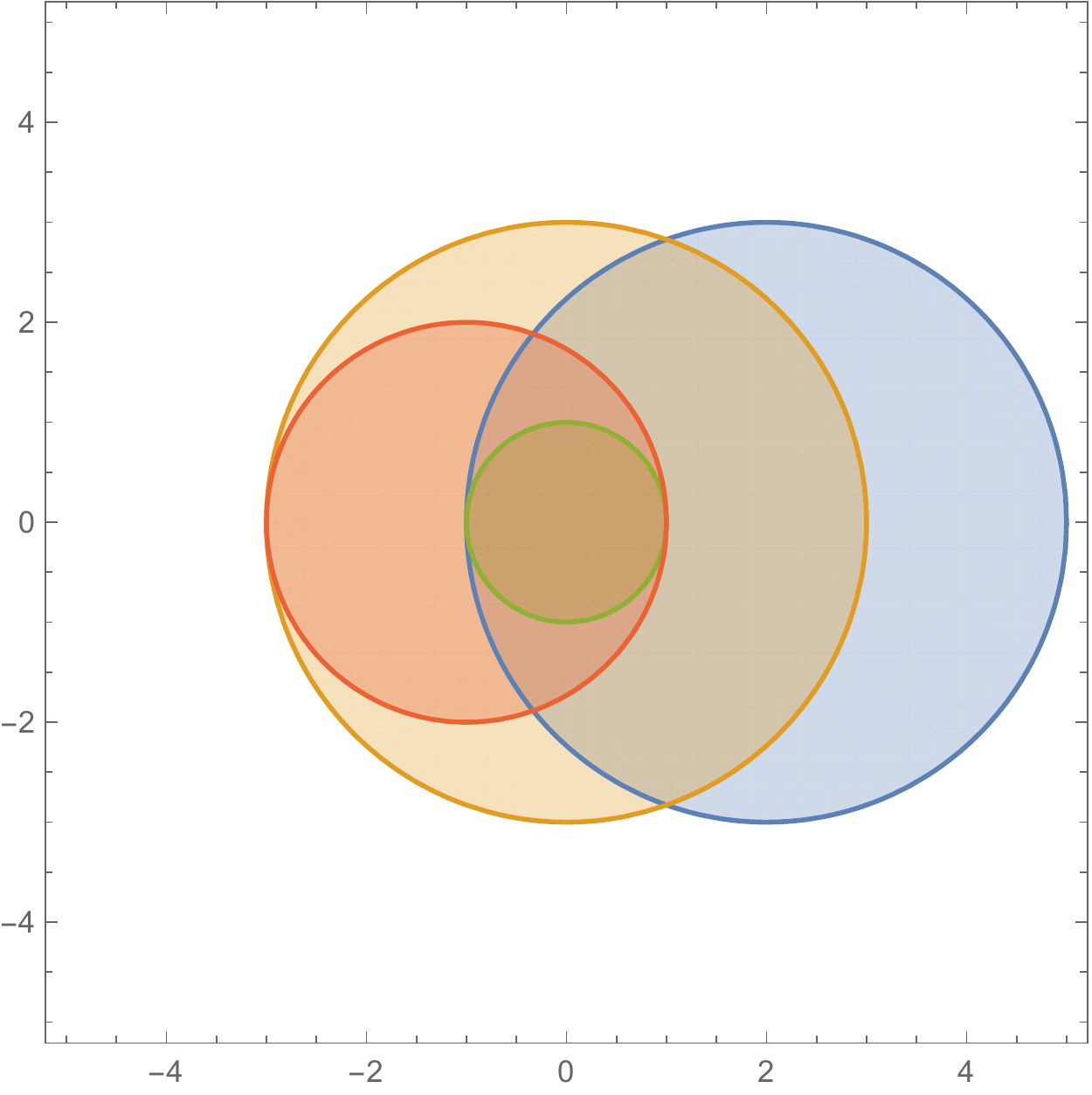}
    \put(46,-10){$\Gamma(B)$}
    \put(32,50.25){$\bullet$}
    \put(36,50.25){$\bullet$}
    \put(50,50.25){$\bullet$}
    \put(59.5,50.25){$\bullet$}
    \put(82,50.25){$\bullet$}
    \end{overpic}
\end{tabular}
\end{center}
\caption{The Gershgorin regions $\Gamma(A)$ and $\Gamma(B)$ each made up of a union of disks corresponding to the adjacency matrix $A=A(G)$ of the graph $G$ in Figure \ref{fig:ex1} and its equitable decomposition $B$ over the automorphism $\psi_0=(2,8,5)(3,9,7)(4,10,6)$, respectively. Black points indicate the eigenvalues $\sigma(A)=\sigma(B)$.}\label{fig:original}
\end{figure}

\begin{example}
Again we consider the graph $G$ shown in Figure \ref{fig:ex1} with basic automorphism $\psi_0=\phi^2=(2,8,5)(3,9,7)(4,10,6)$. The equitable decomposition of the adjacency matrix $A=A(G)$ with respect to $\psi_0$ is given by $B=A_{\psi_0}\oplus B_1\oplus B_2$ where $A_{\psi_0},B_1,B_2$ are given in Equation \eqref{eq:7}. The Gershgorin regions of both $A$ and $B$ are shown in Figure \ref{fig:original} where $\Gamma(B)\subset\Gamma(A)$. That is, the equitable decomposition of $A$ over $\psi_0$ results in an improved Gershgorin estimate of its eigenvalues.
\end{example}

The effectiveness of Gershgorin's theorem in estimating  a matrix's eigenvalues depends heavily on whether  the row sums $\sum_{j=1,j\ne i}^n |M_{ij}|$ are large or small. For example, if the matrix $M$ is the adjacency matrix of a graph $G$ then the $i^{th}$ \emph{disk} of $\Gamma(M)$ has a radius equal to the number of neighbors the vertex $v_i$ has in $G$. Real networks often contain a few vertices that have an abnormally high number of neighbors. These vertices, which are referred to as \emph{hubs}, greatly reduce the effectiveness of Gershgorin's theorem.

{One application of the theory of equitable decompositions is that it can be used to potentially reduce the size of the Gershgorin region associated with a graph's adjacency matrix $A=A(G)$. Note that this region is made up of disks where the radius of the $i^{th}$ disk is equal to the degree of the $i^{th}$ vertex. Hence, the graph's hubs generate the graph's largest Gershgorin disks. The strategy we propose is to find an automorphism $\phi$ of a graph $G$ that permutes the vertices adjacent to the graph's largest hub. After decomposing the adjacency matrix over $\phi$, the resulting decomposed matrix will potentially have a smaller row sum associated with this largest hub and consequently a smaller Gershgorin region. This process can be continued by finding automorphisms of the decomposed graph to further decompose and potentially further reduce the Gershgorin region associated with the decomposed matrix.}

This is demonstrated in the following example.

\begin{figure}
\begin{center}
\begin{tabular}{cc}
\begin{overpic}[scale=.11]{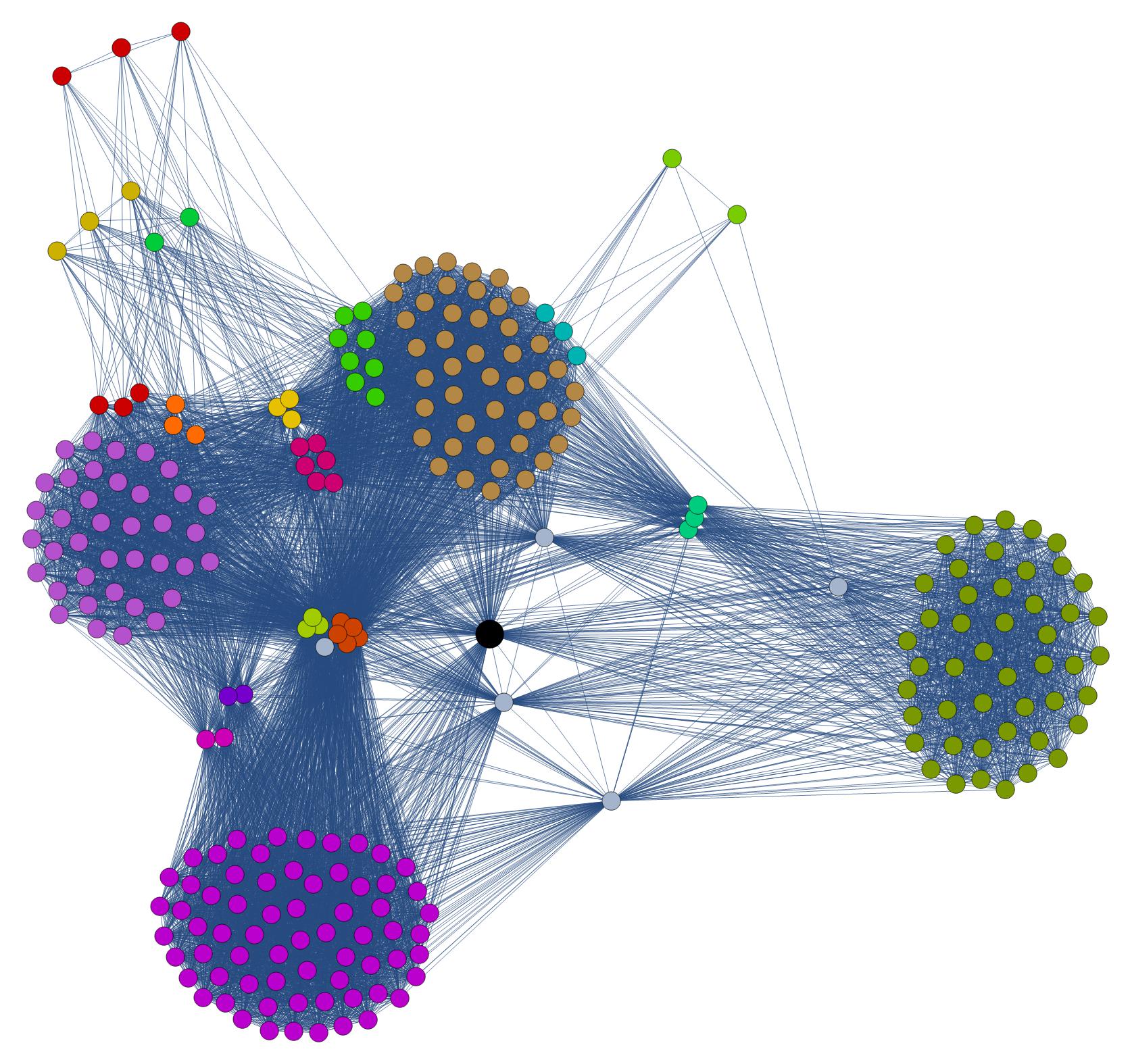}
\put(46,40){\tiny$v_{PR}$}
\end{overpic} & \vspace{0.2in}
\begin{overpic}[scale=.4]{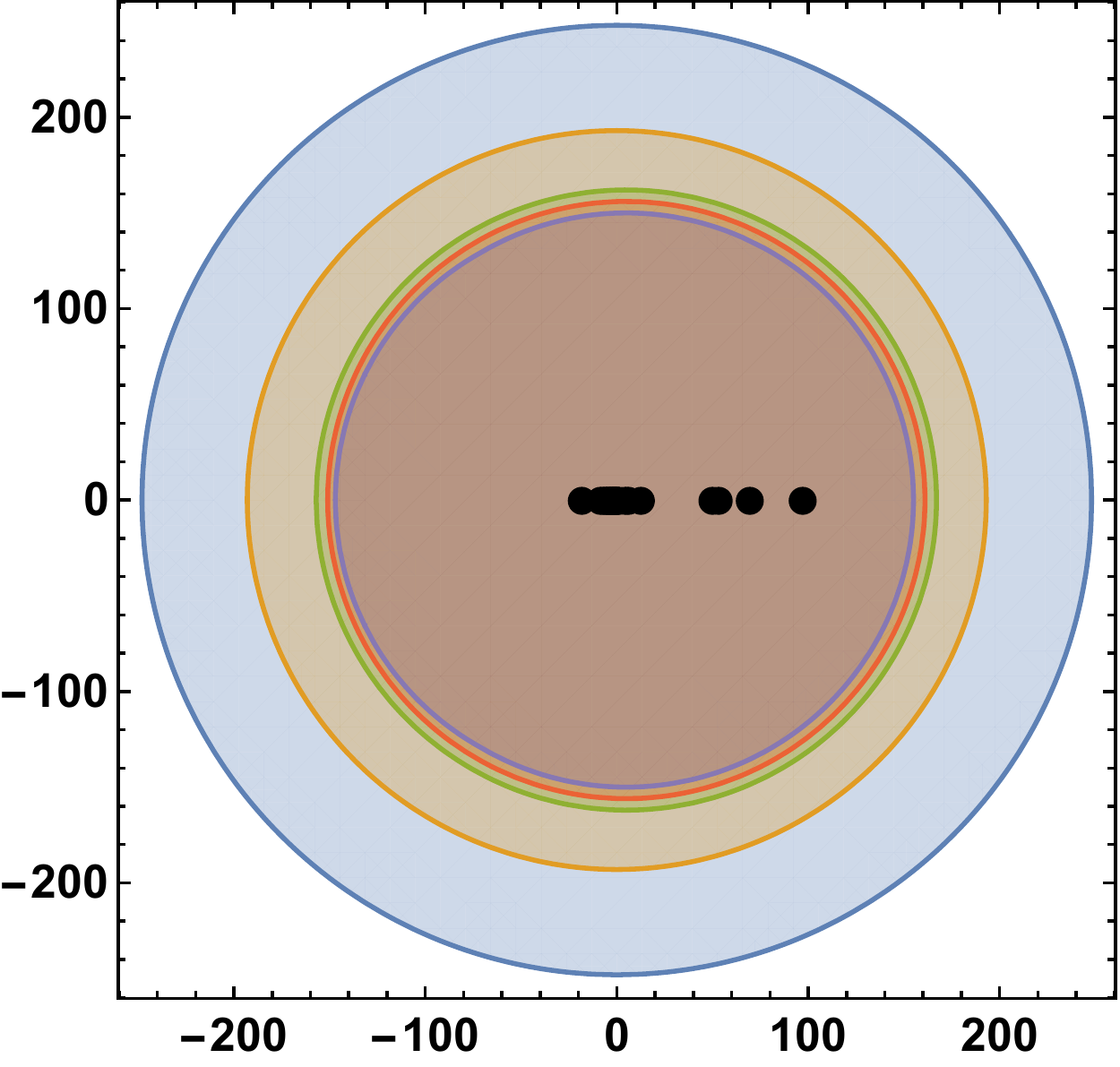}
\end{overpic}
\end{tabular}
\caption{Left: A graph representing 254 individuals belonging to seven different organizations in the Boston area prior to the American revolutionary war. Edges represent whether two individuals belonged to the same organization. The black vertex $v_{PR}$ represents Paul Revere. Right: The transposed Gersgorin regions corresponding to a sequence of equitable decompositions performed on the network's adjacency matrix, in which each subsequent decomposition results in a smaller region contained in the previous. Black points indicate the eigenvalues $\sigma(A)$}\label{fig:revwar}
\end{center}
\end{figure}

\begin{example}\label{ex:revwar}
Figure \ref{fig:revwar} (left) shows a social network of 254 members of seven different organizations in the Boston area prior to the American revolutionary war \cite{H13}. Each vertex represents a person and an edge between two people appears when these two people belong to the same organization. The colors of the graph represent symmetries present in the graph, i.e. two vertices are colored the same if they are \emph{automorphic}, {by which we mean that there is an automorphism $\phi$ of the graph such that these two vertices belong to the same orbit under $\phi$.} The black central vertex $v_{PR}$ with the most neighbors (Paul Revere) is connected to 248 of the 254 vertices.

The Gershgorin region $\Gamma(A)$ where $A$ is the adjacency matrix of the this social network is shown as the largest disk in Figure \ref{fig:revwar} (right). The region consist of the union of 254 concentric disks each centered at the origin the largest of which is the disk of radius 248, corresponding to the vertex $v_{PR}$. Hence, the spectral radius of $A$ is less than or equal to 248.

Using equitable decompositions we can decrease this Gershgorin estimate. The largest contributor to the size of the Gershgorin region is, in fact, the central vertex $v_{PR}$, since it has the highest degree of any vertex in the network. In order to reduce the associated Gershgorin disk, we look for an automorphism $\psi$ that permutes a subset of the vertices neighboring this hub, where our goal is to decompose the network's adjacency matrix $A$ over $\psi$. The issue we run into is that if $\psi$ fixes the vertex $v_{PR}$ then, as is shown in the proof of Theorem \ref{thm:Gersh}, the Gershgorin disk associated with $v_{PR}$ will not decrease in size as $A$ is decomposed over $\psi$. Consequently there will be no improvement in the Gershgorin region.

To actually improve this Gershgorin estimate, we note that finding the eigenvalues of a matrix is equivalent to finding the eigenvalues of its transpose. Thus, making Gershgorin regions from columns instead of rows, i.e. a \emph{transposed Gershgorin region} $\Gamma(M^T)$ of a matrix $M$ could potentially improve the Gershgorin region, even when the region is dominated by the fixed portion of the graph. One complication is that we are not guaranteed the Gershgorin region formed from the \emph{transpose} of the decomposed matrix columns will be contained in the Gershgorin region of the original matrix. Thus, this method of using a matrix' transposed Gershgorin region for gaining improved eigenvalue estimates may not be effective for all networks.

In this example we are, in fact, able to decrease the transposed Gershgorin region associated with the network by decomposing the matrix $A$ over certain automorphisms where points in nontrivial orbits are adjacent to $v_{PR}$. In fact, we are able to find a number of automorphisms that allow us to sequentially decompose the matrix $A$ such that at each step we gain an improve estimate of the network's eigenvalues.

This process results in a significant improvement in the original (transposed) Gershgorin estimate of the network's eigenvalues. This improvement is shown in Figure \ref{fig:revwar} (right), where each disk starting from the largest and moving inward represents the next transposed Gershgorin region after the corresponding equitable decomposition is performed on the network. The final, smallest region is the union of two disks one centered at 5 with radius 150 and one centered at 1 with radius 148, {which is roughly thirty-seven percent the size of the original Gershgorin region associated with the network}.
\end{example}

\section{Graphical Realization of Equitable Decompositions}\label{sec:6}

Carrying out an equitable decomposition is much more visual on a graph than on an associated matrix, and the (tedious) ordering of the labels is trivial when working with graphs instead of matrices. To illustrate the process of an equitable decomposition of a graph we introduce the notion of a folded graph. A folded graph can be thought of as a graph $G$ in which we have folded together all the vertices that are in the same orbit under an automorphism of $G$. The resulting graph can be used to generate a number of smaller graphs $G_{\phi},G_1,\dots,G_{k-1}$ that correspond to an equitable decomposition $M_{\phi}\oplus B_1\oplus\cdots\oplus B_{k-1}$ of a matrix $M$ associated with $G$.

To describe this procedure of folding a graph we first note that a graph $G$ can be either weighted or unweighted. If it is unweighted it is possible to weight the graph's edges by giving each edge unit weight. Under this convention any graph $G$ can be considered to be a weighted graph with weighted adjacency matrix $W=W(G)$.

For simplicity, we assume in this section that the eigenvalues associated with a graph $G$ are the eigenvalues of the graph's weighted adjacency matrix, which we denote by $\sigma(G)$. This can be done without loss in generality since any automorphism compatible matrix $M$ associated with $G$ is the weighted adjacency matrix of a graph $H$ where $Aut(G)=Aut(H)$.

By extending Theorem \ref{thm:2} to graphs, an equitable decomposition of a graph $G$ over a basic automorphism $\phi$ results in a number of smaller matrices $G_0,G_1\dots G_{k-1}$ where
\[\sigma(G)=\sigma(G_0)\cup\sigma(G_1)\dots\cup\sigma(G_{k-1})\]
where $W(G_i)=B_i$ is given by Equation \eqref{eq:B}. Here we describe how the graphs $G_{\phi},G_1\dots G_{k-1}$ can be generated from a single (folded) weighted graph $\mathcal{G}_\phi(m)$. To show how this is done we let $G=(V,E,w)$ denote the (weighted) graph $G$ with vertices $V=V(G)$ and  edges $E=E(G)$. The function $w:E\rightarrow\mathbb{C}$ gives the weight of each edge $(i,j)$ in $E$. The following steps allow one to equitably decompose a graph.

\begin{center}
\emph{Performing Equitable Graph Decompositions}
\end{center}

\begingroup\raggedright\leftskip=20pt\rightskip=20pt

\noindent\textbf{Step 1:} \emph{The basic automorphism $\phi$ partitions the graph's vertices into $V=V_1\cup\dots\cup V_{r}\cup U$, where $U$ is the union of the vertices of orbit size 1. Choose a semi-transversal $\mathcal{T}_0$ of the orbits of $\phi$, i.e. the index of one vertex from each set $V_i$. If this is not the last round, this semi-transversal must be chosen according to the method set out in Proposition \ref{prop:dd}}.


{\noindent\textbf{Step 2:} \emph{From the graph $G=(V,E,w)$ we construct the folded graph $\mathcal{G}_{\phi}(m)=(\mathcal{V}_m,\mathcal{E}_m,\nu_m)$ as follows. The vertices of the graph are labeled by $\mathcal{V}_m=\{\phi^m(i):i\in\mathcal{T}_0\cup\mathcal{U}\}$. The edge weights of $\mathcal{G}_{\phi}(m)$ are given by the formula}
\[
{\nu_m(\phi^m(i),\phi^m(j))=
\begin{cases}
\sum_{\ell=0}^{k-1}\omega^{\ell m}w(i,\phi^\ell(j)) \ &\text{if} \ j\in \mathcal{T}_0
\\
w(i,j) &\text{if } j \in U.
\end{cases}}
\]}

{\noindent\textbf{Step 3:} \emph{Next we generate the $k$ graphs $G_m=\mathcal{G}_{\phi}(m)$ for $m=0,\dots,k-1$ where $\mathcal{G}_{\phi}(0)=G_{\phi}$. If $m=0$ then $\mathcal{V}_0=\mathcal{T}_0\cup U$, otherwise $\mathcal{V}_m=\mathcal{T}_m$. (When drawing $\mathcal{G}_{\phi}(m)$ we draw the vertices in $\mathcal{U}$ as open circles to indicate that these vertices are only in the graph for $m=0$, see Figure \ref{fig:GED}).}}

\par\endgroup

\begin{figure}\label{fig:folded}
\begin{center}
 \begin{overpic}[scale=.2375]{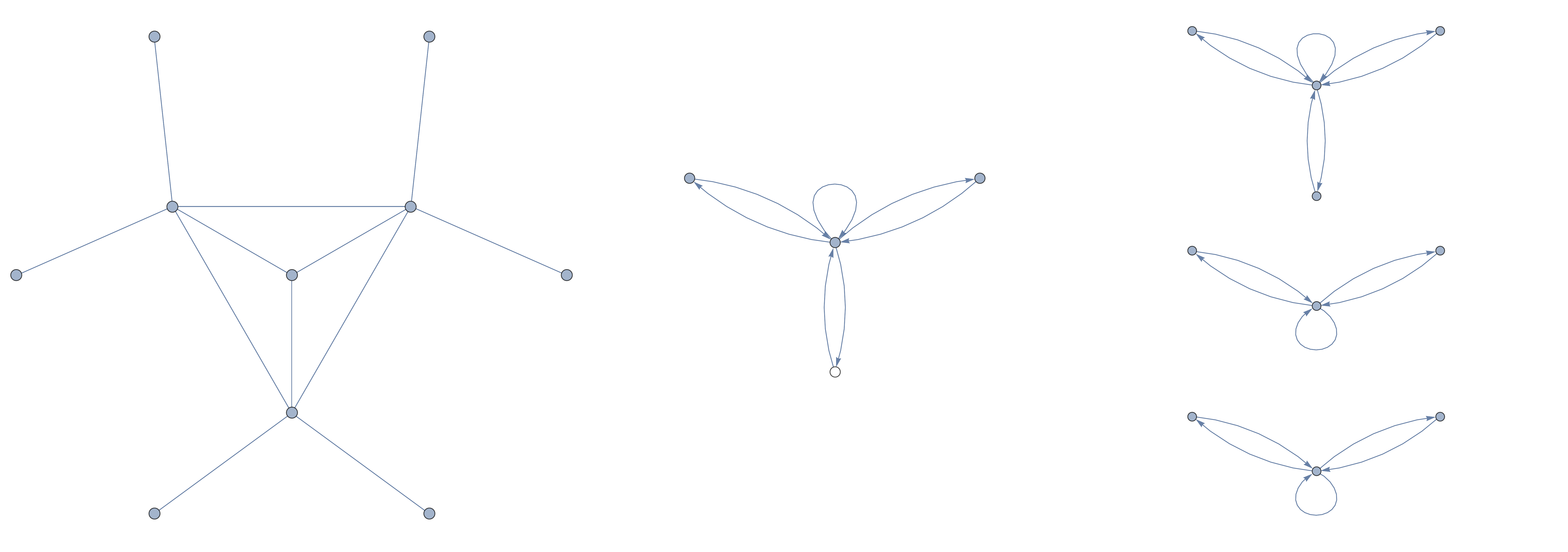}
 \put(18,-1){$G$}
  \put(17.25,16.5){\scriptsize$1$}
  \put(10.5,20.25){\scriptsize$5$}
  \put(0.4,16){\scriptsize$7$}
  \put(8.5,32.5){\scriptsize$6$}
  \put(26,20.25){\scriptsize$8$}
  \put(28.25,32.5){\scriptsize$10$}
  \put(36,16){\scriptsize$9$}
  \put(18.25,7){\scriptsize$2$}
  \put(28.25,2){\scriptsize$4$}
  \put(8.25,2){\scriptsize$3$}
  \put(18,4){$\mathcal{T}_0$}

  \put(7,2.5){ \fbox{\rule{1.3in}{0pt}\rule[-0.5ex]{0pt}{7.5ex}}}

 \put(50.5,5){$\mathcal{G}_{\psi}(m)$}
 \put(54.5,15){$1$}
 \put(50.75,15){$3$}
 \put(49.25,24){$\omega^m+\omega^{2m}$}

 \put(47,19){$1$}
 \put(47,23.5){$1$}
 \put(58.5,19){$1$}
 \put(58.5,23.5){$1$}

 \put(51.5,9.5){\scriptsize$\psi^m(1)$}
 \put(49.2,18.25){\scriptsize$\psi^m(2)$}
 \put(41.5,24.5){\scriptsize$\psi^m(3)$}
 \put(61.5,24.5){\scriptsize$\psi^m(4)$}

 \put(72,28){$G_{\psi}$}
 \put(83.5,33.5){$2$}
 \put(78.5,29){$1$}
 \put(78.5,33){$1$}
 \put(88,29){$1$}
 \put(88,33){$1$}
  \put(81.5,26){$3$}
 \put(85,26){$1$}

 \put(83.53,21){\scriptsize$1$}
 \put(74.75,33){\scriptsize$3$}
 \put(92.5,33){\scriptsize$4$}
 \put(82.5,28.5){\scriptsize$2$}

 \put(72,16){$G_{1}$}
 \put(78.5,15){$1$}
 \put(78.5,19){$1$}
 \put(88,15){$1$}
 \put(88,19){$1$}
 \put(82,11){$-1$}

 \put(74.75,19){\scriptsize$9$}
 \put(92.5,19){\scriptsize$10$}
 \put(83.55,16.5){\scriptsize$8$}

 \put(72,5){$G_{2}$}
 \put(78.5,4.5){$1$}
 \put(78.5,8.5){$1$}
 \put(88,4.5){$1$}
 \put(88,8.5){$1$}
 \put(82,0.5){$-1$}

 \put(74.75,8.5){\scriptsize$7$}
 \put(92.5,8.5){\scriptsize$6$}
 \put(83.55,6){\scriptsize$5$}

 \end{overpic}
\caption{The folded graph $\mathcal{G}_{\psi}(m)$ (center) of the unweighted graph $G$ (left) with basic automorphism $\psi=(2,8,5)(3,9,7)(4,10,6)$ created using the semi-transversal $\mathcal{T}_0=\{2,3,4\}$. The equitable decomposition $G_{\psi}$, $G_{1}$, $G_2$ of $G$ is shown (right) where $G_{\psi}=\mathcal{G}_{\psi}(0)$ and $G_m=\mathcal{G}_{\psi}(m)$ for $m=1,2$. Here $\omega=e^{2\pi i/3}$.}\label{fig:GED}
\end{center}
\end{figure}

\vspace{0.1in}

If $\phi$ is a uniform automorphism of $G$ then $G$ can be equitably decomposed over $\phi$ where steps 1--3 are adjusted such that $\mathcal{U}=\emptyset$. If $\phi$ is separable then using the method described for decomposing a matrix with respect to a separable automorphism in Section \ref{sec:3} we can sequentially decompose $G$ over some sequence of basic automorphisms $\psi_0,\psi_1,\dots,\psi_h$ associated with $\phi$.

One of the main advantages of visualizing equitable decompositions in terms of a folded graph as opposed to the  matrix procedure prescribed in Section \ref{sec:3}, is that it makes the blocks of the decomposition immediately apparent.  For instance, recall that in Example \ref{ex:sequence}, at the end of Round 2 we were required to reorder the vertices (equivalent to relabeling the graph) in order to see the block diagonal structure. In the graphical approach presented in this section the connected subgraphs are the blocks resulting from the decomposition.

\begin{example}
Let $G$ be the graph originally introduced in Figure \ref{fig:ex1}, which is also shown in Figure \ref{fig:GED} (left), with the basic automorphism $\psi=(2,8,5)(3,9,7)(4,10,6)$. Following steps 1--3 given in this section, we first choose the semi-transversal $\mathcal{T}_0=\{2,3,4\}$ of $\psi$. The folded graph $\mathcal{G}_{\psi}(m)$ that results from this choice is shown in Figure \ref{fig:GED} (center). The graphs $G_{\psi}$, $G_1$, $G_2$, which together form an equitable decomposition of the graph $G$ over $\psi$ are also shown in the same figure (right). One can check that $\sigma(G)=\sigma(G_{\psi})\cup\sigma(G_1)\cup\sigma(G_2)$ since the matrices $W(G_{\psi})=A(0)_{\psi_0}$, $W(G_1)=B(0)_1$, and $W(G_2)=B(0)_2$ given in Equation \eqref{eq:7}.
\end{example}

\section{Conclusion}\label{sec:7}

The purpose of this paper is to extend the theory of equitable decomposition as well as to introduce a number of its applications. {The theory of equitable decompositions, first presented in \cite{BFW}, describes how an automorphism compatible matrix $M$ can be decomposed over any \emph{uniform} or \emph{basic} automorphism of an associated graph. In this paper we extend this result by providing a method for equitably decomposing $M$ over any separable automorphism $\phi$ by converting any such automorphism into a sequence of basic automorphisms. As in \cite{BFW}, this decomposition results in a number of smaller matrices $M_{\phi},B_1,\dots,B_h$ whose collective eigenvalues are the eigenvalues of the original matrix $M$ (see Theorem \ref{thm:2} and Proposition \ref{thm:3}). Importantly, this decomposition relies only on a knowledge of the automorphism $\phi$ and requires no information regarding any spectral properties of the graph.}

As previously mentioned, if $\phi$ is not a separable automorphism then some power of this automorphism will be separable. Therefore, if any automorphism of a graph is known it is possible to use this automorphism or some power of this automorphism to equitably decompose an associated matrix M. Additionally, the algorithm we give for equitably decomposing a graph depends both on the order of the prime factorization of the automorphism's order as well as some choices surrounding the semi-transversals. {An open question is} whether making these choices differently {will} result in a fundamentally different decomposition (i.e. not just a simple reordering of resulting block matrices) or if, up to reordering, the decomposition is, in fact, unique.

{Beyond preserving the eigenvalues of a matrix, we also show as a direct application of this theory that the eigenvectors of the matrix $M$ are fundamentally related to the eigenvectors of the matrices $M_{\phi},B_1,\dots,B_k$ and give an explicit formula for their construction (see Theorem \ref{eigvec}).} This theorem also extends to generalized eigenvectors in the case where the original matrix does not have a full set of eigenvectors. {In this way the concept of an equitable decomposition can be applied not only to matrices but also to their eigenvectors.}

A theme throughout this paper, {and in particular in the applications introduced here}, is that information regarding {spectral} properties of a graph {(network)} can be deduced from knowledge of the graph's {(network's)} local symmetries. One example, shown in Proposition \ref{lem:3}, is that {if $M$ is nonnegative and irreducible then the divisor matrix $M_{\phi}$ associated with the automorphism $\phi$ has the same spectral radius as $M$. Since $M_{\phi}$ is smaller than $M$, and potentially much smaller if the graph (network) is highly symmetric, then $M_{\phi}$ could be a useful tool in determining this spectral radius. It is an open question whether this result can be extended to matrices with negative or complex-valued entries.}

Another application mentioned here is concerned with the way in which Gershgorin regions are affected by equitable decompositions. Here, we prove that the Gershgorin region of an equitable decomposition is contained in the Gershgorin region of the original matrix (see Theorem \ref{thm:Gersh}). Thus, such decompositions can be used to potentially reduce the size of the resulting Gershgorin region. {In particular,} Example \ref{ex:revwar} demonstrates this by {significantly reducing} the size of the Gershgorin region {associated with a large network via a series of decompositions}.

{Lastly, we introduce the analogue of an equitable decomposition for graphs. An equitable graph decomposition, which is built around the notion of a folded graph,  has the advantage that it is more visual than an equitable decomposition on a matrix. Moreover, performing a graph decomposition does not} require any reordering of the graph's vertices as opposed to the row and column reordering that is required in an equitable decomposition of a matrix.

\section*{Acknowledgments}
{The authors would like to thank Wayne Barrett for his many helpful comments and suggestions regarding this paper. We would also like to thank the anonymous referee for their valuable feedback. The work of A. Francis and B. Webb was partially supported by the DoD grant HDTRA1-15- 0049.}

\bibliography{references}{}
\bibliographystyle{plain} 

\end{document}